\newtheorem{theorem}{Theorem}[section]
\newtheorem{problem}[theorem]{Problem}
\newtheorem{corollary}[theorem]{Corollary}
\newtheorem{proposition}[theorem]{Proposition}
\newtheorem{lemma}[theorem]{Lemma}
\newtheorem{algorithm}[theorem]{Algorithm}
\newtheorem{protocol}[theorem]{Protocol}
\newtheorem{remark}[theorem]{Remark}
\newtheorem{example}[theorem]{Example}
\def\crit{{\mathcal G}^c}
\def\Nat{{\mathbb N}}
\def\R{{\mathbb R}}
\def\Z{{\mathbb Z}}
\def\Rmax{\R_{\max}}
\def\Zmax{\Z_{\max}}
\def\cycle{Z}
\def\digr{{\mathcal G}}
\def\rem{\operatorname{rem}}
\newcommand{\walkslen}[3]{\mathcal{W}^{#3}(#1\to #2)}
\newcommand{\walks}[2]{\mathcal{W}(#1\to #2)}
\newcommand{\walkslennode}[4]{\mathcal{W}^{#3}(#1\xrightarrow{#4} #2)}
\def\subcrit{{\mathcal G}}
\def\0{-\infty}
\begin{document}
\makeatletter
\title{On the tropical discrete logarithm problem and security of a protocol based on  tropical semidirect product}

\author{Any Muanalifah and Serge\u{\i} Sergeev\footnote{University of Birmingham, School of Mathematics, Edgbaston B15 2TT, UK.} \footnote{Corresponding Author: Serge{\u{\i}} Sergeev. E-mails: any.math13@gmail.com,  sergiej@gmail.com}}

\date{}

\maketitle
\makeatother

\begin{abstract}
Tropical linear algebra has been recently put forward by Grigoriev and Shpilrain
~\cite{grigoriev2014tropical,grigoriev2018tropical} as a promising platform for implementation of protocols of Diffie-Hellman and Stickel type. Based on the CSR expansion of tropical matrix powers, we suggest a simple algorithm for the following tropical discrete logarithm problem: ``Given that $A=V\otimes F^{\otimes t}$ for a unique $t$ and matrices $A$, $V$, $F$ of appropriate dimensions, find this $t$.'' We then use this algorithm to suggest a simple attack on a protocol based on the tropical semidirect product. The algorithm and the attack are guaranteed to work in some important special cases and are shown to be efficient in our numerical experiments.

{\em Keywords:} Tropical algebra, semidirect product, matrix powers, cryptanalysis.

{\em MSC classification:} 15A80, 94A60, 15A23.
\end{abstract}

\section{Introduction}
Tropical (max-plus) semiring is the set of real numbers with adjoined negative infinity $\Rmax=\R\cup\{-\infty\}$, equipped with tropical addition $a\oplus b:=\max(a,b)$ and tropical multiplication $a\otimes b:=a+b$. All the usual axioms hold (such as associativity, commutativity and distributivity), however there is a lack of genuine additive inverses: although the definition of $\ominus$ is possible via symmetrization (Baccelli et al.~\cite{baccelli1992synchronization}), it is not straightforward and not easy to use. Instead of this, we have idempotency ($a\oplus a=a$), nonnegativity ($a\geq -\infty$, with $-\infty$ playing the role of additive zero) and close connection to the order: $a\oplus b=b\Leftrightarrow a\leq b$. Note that the multiplicative inverses in $\Rmax$ are well-defined for all elements except for $-\infty$: we have $a^-:=-a$. 

The semiring operations are easily extended to matrices and vectors: we have $(A\oplus B)_{ij}=a_{ij}\oplus b_{ij}$ for any two matrices $A=(a_{ij})$ and $B=(b_{ij})$ with entries in $\Rmax$ of same dimensions, and $(A\otimes B)_{ij}=\bigoplus_k a_{ik}\otimes b_{kj} $ for any two matrices $A$ and $B$ of appropriate dimensions. Using this product, we can also define the tropical matrix powers $A^{\otimes t}=\underset{t}{\underbrace{A\otimes A\otimes \ldots \otimes A}}.$ We can define scalar tropical matrix powers not only for integer exponents but also for real exponents, as follows:
\[a^{\otimes k}= k\times a,\quad\forall a\in\mathbb{R}\quad \forall k\in\mathbb{R}; \quad (-\infty)^{\otimes k}=-\infty,\quad\forall k>0; \quad (-\infty)^{\otimes 0}=0. \]
Note that the $k$th power of $a\in\mathbb{R}$ in the semiring sense is $a$ multiplied by $k$ in the usual sense.

The behaviour of tropical matrix powers is in many ways similar to that of the nonnegative matrix powers (recall that any element of the tropical semiring is nonnegative) and can be considered as tropical counterpart of the classical Perron-Frobenius theory. To this end, we also have the tropical spectral problem, and a theorem that any matrix $F\in\Rmax^{d\times d}$ has at least one tropical eigenvalue, meaning $\lambda\in\Rmax$ such that there is at least one $x\in\Rmax^d$ with at least one component in $\R$ such that $F\otimes x=\lambda\otimes x$. This claim was originally proved by 
Vorobyev~\cite{vorobyev1,vorobyev2}, see also Butkovi\v{c}~\cite{butkovic} for a complete solution of this problem in all cases. Vector $x\in\Rmax^d$ with at least one component in $\R$ such that $F\otimes x=\lambda\otimes  x$ is called a right eigenvector of $F$ and vector $y\in\Rmax^d$ with at least one component in $\R$ such that $y\otimes F=\lambda\otimes y$ is called a left eigenvector of $F$. As usual, left eigenvectors of $F$ are transposed right eigenvectors of $F^T$, and there is no need for a separate theory for them.

The largest tropical eigenvalue of $F\in\Rmax^{d\times d}$, denoted by $\lambda(F)$, can be computed explicitly as follows:
\begin{equation}
\label{e:mcm}
\lambda(F)=\bigoplus_{k=1}^d\bigoplus_{i_1,\ldots, i_k} (F_{i_1i_2}\otimes\ldots\otimes F_{i_ki_1})^{\otimes 1/k}=\max\limits_{1\leq k\leq d}\max\limits_{i_1,\ldots, i_k} 
\frac{F_{i_1i_2}+\ldots+F_{i_ki_1}}{k}.
\end{equation}
Observe that $\lambda(F)=\lambda(F^T)$. Formula~\eqref{e:mcm} is best understood in terms of the associated weighted digraph $\digr(F)=(N,E)$, where $N=\{1,\ldots, d\}$ is the set of nodes and $E=\{(i,j)\colon F_{ij}\neq -\infty\}$ is the set of arcs, weighted by the corresponding entries $F_{ij}$. 
In terms of this digraph, $\lambda(F)$ is the maximum mean weight of all cycles on $\digr(F)$, also called the maximum cycle mean of $\digr(F)$ (or of $F$). When $\digr(F)$ is strongly connected we say that $F$ is irreducible. In this case $\lambda(F)$ is the unique tropical eigenvalue of $F$.

This indicates an intimate connection between tropical linear algebra and combinatorial optimisation problems, for which many other examples were given by Butkovi\v{c}~\cite{butkovic-combinatorics}. Other important examples of such connection are the metric matrix $F^+$ and the Kleene star $F^*$, defined for $F\in\Rmax^{d\times d}$ in the case $\lambda(F)\leq 0$ as the matrix series
\begin{equation}
\label{e:metricmatrix}    
    F^+=F\oplus F^{\otimes 2}\oplus\ldots\oplus F^{\otimes d},
\end{equation}
\begin{equation}
\label{e:kleenestar}    
    F^*=I\oplus F\oplus\ldots\oplus F^{\otimes d-1}.
\end{equation}
It is easy to see that 1) for arbitrary $t$, any entry $F^{\otimes t}_{ij}$ is the maximum weight of a walk on $\digr(F)$ connecting $i$ to $j$ with length $t$, 2) any entry $F^+_{ij}$ of the metric matrix is the maximum weight of a walk on $\digr(F)$ connecting $i$ to $j$ of arbitrary length.  

Grigoriev and Shpilrain~\cite{grigoriev2014tropical,grigoriev2018tropical} suggested a number of protocols based on the tropical linear algebra, which is briefly introduced above. In particular, they suggested a tropical version of Stickel's protocol, motivated by the lack of genuine  additive inverses and by the lack of multiplicative inverses of generic tropical matrices: a tropical matrix cannot be inverted unless it is a generalised monomial matrix~\cite{butkovic}. However, subsequently, an attack on their tropical implementation of Stickel's protocol was suggested by Kotov and Ushakov~\cite{kotov2015analysis}. Furthermore, in a previous publication~\cite{muanalifahmodifying} 
we analysed a number of other tropical implementations of Stickel's protocol based on commuting matrices in tropical algebra and developed a generalization of the attack~\cite{kotov2015analysis}, which applies to all of them. Although this attack becomes inefficient as the number of monomials or generators of the domain of commuting matrices increases, it is quite successful and motivates the search of other protocols based on tropical algebra. To this end, Grigoriev and Shpilrain~\cite{grigoriev2018tropical} suggested new protocols based on two different versions of tropical semidirect product. An attack on both protocols was more recently suggested by Rudy and Monico~\cite{RM-2005} and then another attack (on Protocol 1) by Isaac and Kahrobei~\cite{IK-2011}. For discussion of these attacks, see Subsection~\ref{ss:discussion} in the end of this paper.

Our first aim here to solve what we call the tropical discrete logarithm problem (see Problem~\ref{prob:discretelog} below), as we think that it can be quite important for the existing and future protocols in tropical cryptography. Thus
we formulate a tropical discrete logarithm problem and suggest a solution of it based on the weak CSR expansion of Merlet et al.~\cite{MNS}.  The solution is also very closely related to the quadratic bound on the ultimate periodicity of critical rows and columns of tropical matrix powers obtained by Nachtigall~\cite{Nacht} and improved by Merlet et al.~\cite{MNSS}. Theoretically, the solution is guaranteed to work in some special cases, but it also has 100\% success in our numerical experiments.

We then show how our solution to the tropical discrete logarithm problem can be applied to suggest yet another attack on Protocol 1 of~\cite{grigoriev2018tropical}. The attack is based on the ultimate periodicity of (the critical columns of) the tropical matrix powers, since, as we show in Proposition~\ref{p:MHpowers}, the semidirect powers used in Protocol 1 of~\cite{grigoriev2018tropical} can be expressed via the tropical matrix powers. 

The rest of the paper is organised as follows. In Section~\ref{s:ultper} we give more background on the ultimate periodicity in tropical linear algebra, formulate the tropical discrete logarithm problem and give a solution to this problem. We then prove that the solution is guaranteed to work in some special cases. In Section~\ref{s:semidirect} we revisit the tropical semidirect product used by Grigoriev and Shpilrain~\cite{grigoriev2018tropical} to construct their Protocol 1. In particular, we show how the messages exchanged by Alice and Bob are related to tropical matrix powers. The protocol and attack on it are described in Section~\ref{s:protocol}. This is followed by some toy examples,  discussion of numerical experiments and attacks suggested by Issac and Kahrobaei~\cite{IK-2011} and Rudy and Monico~\cite{RM-2005}.

\section{Discrete logarithm problem and ultimate periodicity}
\label{s:ultper}

In this section we will discuss the algorithmic solution of the following problem, which we call the tropical discrete logarithm.
\begin{problem}[Tropical Discrete Logarithm]
\label{prob:discretelog}
Suppose that $V\in\Rmax^{m\times d}$, $F\in\Rmax^{d\times d}$ and secret key $t\geq 1$ are used to produce $A=V\otimes F^{\otimes t}$. Knowing $A,$
$V$ and $F$ and that $t$ is unique, find $t$. 
\end{problem}

There is an important special case, in which the tropical discrete logarithm is well defined. 

\begin{lemma}
\label{l:disclog}
Suppose that $V$ has finite entries and $F$ is irreducible. Then $V\otimes F^{\otimes t_1}\neq V\otimes F^{\otimes t_2}$ for any $t_1$ and $t_2$ with $t_1\neq t_2$ if and only if $\lambda(F)\neq 0$.
\end{lemma}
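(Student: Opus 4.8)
The plan is to reduce everything to the asymptotic growth of the entries of $F^{\otimes t}$. Since $F$ is irreducible, $\lambda:=\lambda(F)$ is its unique eigenvalue, and the fundamental tool I would invoke is the cyclicity (ultimate periodicity) theorem: there exist an integer period $p\geq 1$ and a threshold $T\geq 1$ such that
\[ F^{\otimes(t+p)}=\lambda^{\otimes p}\otimes F^{\otimes t}=p\lambda+F^{\otimes t}\qquad\text{(entrywise)}\quad\text{for all } t\geq T. \]
Writing $\Phi(t):=\bigoplus_{k,j}(V\otimes F^{\otimes t})_{kj}=\max_{k,i,j}(V_{ki}+F^{\otimes t}_{ij})$ for the largest entry of $V\otimes F^{\otimes t}$, this relation, together with the finiteness of $V$ (which guarantees that at least one entry tracking a critical cycle stays finite), yields the key estimate $\Phi(t)=t\lambda+O(1)$; in particular $\Phi(t)$ is bounded if and only if $\lambda=0$.

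For the direction $\lambda=0\Rightarrow$ non-injectivity (the easy one), the displayed relation with $\lambda=0$ gives $F^{\otimes(T+p)}=F^{\otimes T}$, hence $V\otimes F^{\otimes(T+p)}=V\otimes F^{\otimes T}$ with $T\neq T+p$; here finiteness of $V$ is not even needed.

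For the converse, $\lambda\neq 0\Rightarrow$ injectivity, I would argue by contradiction. Suppose $V\otimes F^{\otimes t_1}=V\otimes F^{\otimes t_2}$ with $t_1<t_2$, and set $p'=t_2-t_1$. Right-multiplying both sides by $F^{\otimes s}$ shows $V\otimes F^{\otimes(t+p')}=V\otimes F^{\otimes t}$ for every $t\geq t_1$, so the sequence $t\mapsto V\otimes F^{\otimes t}$ is periodic with period $p'$ from $t_1$ onwards. Consequently $\Phi(t)$ is periodic, hence bounded, for $t\geq t_1$. This contradicts $\Phi(t)=t\lambda+O(1)$, which tends to $\pm\infty$ precisely because $\lambda\neq 0$. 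Therefore no such collision exists.

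The main obstacle is making the estimate $\Phi(t)=t\lambda+O(1)$ fully rigorous, and this is where the hypotheses are genuinely used. The upper bound $\Phi(t)\leq t\lambda+O(1)$ follows by decomposing any length-$t$ walk on $\digr(F)$ into a simple path plus cycles and bounding each cycle mean by $\lambda$, so that the weight is at most $t\lambda$ plus a bounded path correction. The lower bound $\Phi(t)\geq t\lambda-O(1)$ requires exhibiting, for each large $t$, a length-$t$ walk of weight close to $t\lambda$ obtained by traversing a critical cycle and adjusting its length by a bounded path; irreducibility guarantees such access, and finiteness of $V$ guarantees that the corresponding entry of $V\otimes F^{\otimes t}$ stays finite and still grows like $t\lambda$. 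Once this two-sided estimate is in place, the boundedness-versus-periodicity contradiction closes the argument.
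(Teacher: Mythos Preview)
Your argument is correct, and the $\lambda(F)=0$ direction is handled exactly as in the paper, via ultimate periodicity. For the $\lambda(F)\neq 0$ direction, however, you take a genuinely different route. The paper observes that a collision $V\otimes F^{\otimes t_1}=V\otimes F^{\otimes t_2}$ makes each row of $V\otimes F^{\otimes t_1}$ a left eigenvector of $F^{\otimes(t_2-t_1)}$ with eigenvalue $0$, and then invokes the fact (cited from \cite{butkovic} and \cite{TwoCores}) that the unique eigenvalue of a power of an irreducible matrix is $(t_2-t_1)\lambda(F)\neq 0$. Your approach instead tracks the maximal entry $\Phi(t)$ and derives a contradiction between its boundedness (forced by periodicity after a collision) and its linear growth $\Phi(t)=t\lambda+O(1)$. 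The paper's argument is shorter and more algebraic but leans on external spectral results; yours is more self-contained and combinatorial, and in fact your walk-decomposition derivation of $\Phi(t)=t\lambda+O(1)$ makes the $\lambda\neq 0$ direction independent of the cyclicity theorem altogether. One minor remark: once you have cited the cyclicity theorem $F^{\otimes(t+p)}=p\lambda+F^{\otimes t}$ for $t\geq T$, the estimate $\Phi(t)=t\lambda+O(1)$ follows immediately from $\Phi(t+p)=p\lambda+\Phi(t)$ (using only that $\Phi$ is finite, which holds because $V$ is finite and $F^{\otimes t}$ has a finite entry for every $t\geq 1$ by strong connectivity), so the separate walk-decomposition paragraph, while correct, is not strictly needed.
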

\begin{proof}
Suppose that we have $V\otimes F^{\otimes t_1}= V\otimes F^{\otimes t_2}$ for some  $t_1<t_2$. However, then each row $V\otimes F^{\otimes t_1}$ with some finite entries (which has some finite entries since so does $V$ and since $F$ is irreducible) is a left eigenvector of $F^{\otimes (t_2-t_1)}$ with eigenvalue $0$.
However, since $F$ is irreducible, by \cite[Theorem 4.4.8]{butkovic}  $\lambda(F)\neq 0$ is the only eigenvalue of $F$ (both for left and for right eigenvectors) and by \cite[Corollary 5.5]{TwoCores} the set of eigenvalues of $F^{\otimes (t_2-t_1)}$ consists only of the value $(t_2-t_1)\times \lambda(F)\neq 0$, thus it is the unique eigenvalue of $F^{\otimes (t_2-t_1)}$ (both for left and for right eigenvectors). This contradiction shows that $V\otimes F^{\otimes t_1}=V\otimes F^{\otimes t_2}$ implies $t_1=t_2$, so the tropical discrete logarithm is well-defined.

If $\lambda(F)=0$, then the sequence $(V\otimes F^{\otimes t})_{t\geq 1}$ is ultimately periodic~\cite{CDQV-85,CDQV-83} (see also \cite{baccelli1992synchronization,butkovic,heidergott2014max}), implying that the tropical discrete logarithm is not well-defined.
\end{proof}

Now consider $F\in\Rmax^{d\times d}$ with $\lambda(F)\neq -\infty$. The critical graph of $F$, denoted by $\crit(F)$, is the subgraph of $\digr(F)$, which consists of all nodes and arcs of the cycles where the maximum cycle mean $\lambda(F)$ is attained. It is easy to see that the critical graph in general consists of several strongly connected components (abbreviated as s.c.c.), which do not have any connection to one another.

Cyclicity of each component of $\crit(F)$ is defined as g.c.d. of the lengths of all cycles of that component .
Now, suppose that the critical graph $\crit(F)$ has $l$ s.c.c. $\crit_1, \ldots,\crit_l$
with corresponding cyclicities $\sigma_1,\ldots,\sigma_l$.
For all $\nu\in \{1,...,l\}$, each $\nu$th component gives rise to a $CSR$ term via the following procedure.  
 
Let $\lambda=\lambda(F)$.
Denote $U_{\nu}=((\lambda^-\otimes F)^{\otimes\sigma_{\nu}})^+$ (using the metric matrix defined in~\eqref{e:metricmatrix}). Then, let matrices $C_{\nu},$ $R_{\nu}$ and $S_{\nu}$ be defined by:
\begin{equation}
\label{csrdef}
\begin{split}
(C_{\nu})_{ij}&=
\begin{cases}
(U_{\nu})_{ij} &\text{if $j$ is in $\crit_{\nu}$}\\
\0 &\text{otherwise,}
\end{cases}\quad
(R_{\nu})_{ij}=
\begin{cases}
(U_{\nu})_{ij} &\text{if $i$ is in $\crit_{\nu}$}\\
\0 &\text{otherwise,}
\end{cases}\\
(S_{\nu})_{ij}&=
\begin{cases}
\lambda^- \otimes F_{ij} &\text{if $(i,j)\in \crit_{\nu}$}\\
\0 &\text{otherwise.}
\end{cases}
\end{split}
\end{equation}

Define also matrices $B_{\nu}[F]$ and $B[F]$ by
\begin{equation}
\label{e:bdef}
(B_{\nu}[F])_{ij}=
\begin{cases}
-\infty, & \text{if $i\in\crit_{\nu}$ or $j\in\crit_{\nu}$},\\
F_{ij}, &\text{otherwise},
\end{cases}\quad 
(B[F])_{ij}=
\begin{cases}
-\infty, & \text{if $i\in\crit(F)$ or $j\in\crit(F)$},\\
F_{ij}, &\text{otherwise}.
\end{cases}
\end{equation}

Denote by $t(\rem\sigma)$ the remainder of $t$ modulo $\sigma$ (i.e., $r\in\{0,\ldots,\sigma-1\}$) such that $t=k\sigma+r$ for some $k$. Denote
 $C_{\nu} S_{\nu}^{k} R_{\nu}[F]=
 C_{\nu}\otimes S_{\nu}^{k}\otimes R_{\nu}$ and $C_{\nu} S_{\nu}^{k}[F]=
 C_{\nu}\otimes S_{\nu}^{\otimes k}$ for more brevity and to indicate the matrix ($F$) from which $C_{\nu}$, $S_{\nu}$ and $R_{\nu}$ are defined. 
 
The following claims can be derived from certain results of~\cite{MNS}, see Appendix.

\begin{proposition}[Coro. of \cite{MNS}, Theorem 4.1 and Corollary 4.3]
\label{p:CSR}
Let $F\in\Rmax^{d\times d}$ with $\lambda=\lambda(F)\neq -\infty$ and suppose that $\crit(F)$ has components $\crit_1,\ldots,\crit_l$
and $\sigma_{\nu}$ for $1\leq \nu\leq l$ are their 
cyclicities. Then for any $\nu\in\{1,\ldots,l\}$ 
\begin{equation}
\label{e:CSR}
F^{\otimes t}=\lambda^{\otimes t}\otimes  C_{\nu}S_{\nu}^{t(\rem\sigma_\nu)}R_{\nu}[F] \oplus (B_{\nu}[F])^{\otimes t},\qquad \forall t\geq (d-1)^2+1.     
\end{equation}
\end{proposition}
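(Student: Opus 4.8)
The plan is to read off the claim from the weak CSR expansion of \cite{MNS} by means of the combinatorial description of tropical matrix powers. First I would recall that $F^{\otimes t}_{ij}$ equals the maximum weight of a walk of length $t$ from $i$ to $j$ in $\digr(F)$. Since $F^{\otimes t}=\lambda^{\otimes t}\otimes(\lambda^-\otimes F)^{\otimes t}$ and the matrices $C_\nu,S_\nu,R_\nu$ in \eqref{csrdef} are defined from the normalised matrix $\lambda^-\otimes F$, I would normalise by replacing $F$ with $\lambda^-\otimes F$; this makes the maximum cycle mean equal to $0$, gives every critical arc weight $0$, accounts for the prefactor $\lambda^{\otimes t}$ and reduces the claim to the case $\lambda=0$, where \eqref{e:CSR} reads $F^{\otimes t}=C_\nu S_\nu^{t(\rem\sigma_\nu)}R_\nu[F]\oplus(B_\nu[F])^{\otimes t}$. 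At this point Theorem 4.1 and Corollary 4.3 of \cite{MNS} apply, and the remaining work is to specialise them to the single component $\crit_\nu$ and to match the notation of \eqref{csrdef}--\eqref{e:bdef}.

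The core is a dichotomy on walks. Fixing $i,j$, I would take a walk $W$ of length $t$ realising $F^{\otimes t}_{ij}$ and split on whether $W$ meets a node of $\crit_\nu$. If it does not, then $W$ lives entirely off $\crit_\nu$, so its weight is at most $(B_\nu[F])^{\otimes t}_{ij}$ by the very definition of $B_\nu[F]$ in \eqref{e:bdef}. Conversely, both terms on the right of \eqref{e:CSR} are dominated by genuine walk weights---the $B_\nu$ term obviously, and $C_\nu S_\nu^{t(\rem\sigma_\nu)}R_\nu[F]$ because it concatenates an optimal entry path (recorded by $U_\nu$ through $C_\nu$), a critical sojourn of normalised weight $0$, and an optimal exit path (through $R_\nu$). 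Hence the inequality ``$\geq$'' in \eqref{e:CSR} is immediate, and the whole content lies in the reverse inequality: a maximum-weight walk that does meet $\crit_\nu$ cannot exceed the CSR term.

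For that reverse inequality I would invoke the structural part of \cite{MNS}: once $t$ is large, a maximum-weight walk meeting $\crit_\nu$ can be rerouted, without loss of weight, into the canonical shape ``enter $\crit_\nu$, cycle inside it, exit $\crit_\nu$''. The entry and exit segments are exactly the columns and rows of $U_\nu=((\lambda^-\otimes F)^{\otimes\sigma_\nu})^+$ retained in $C_\nu$ and $R_\nu$, the long critical middle has normalised weight $0$ and is propagated by $S_\nu$, and, because $\crit_\nu$ has cyclicity $\sigma_\nu$, the admissible number of middle steps is fixed modulo $\sigma_\nu$, which is why the exponent collapses to $t(\rem\sigma_\nu)$. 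A point worth stressing is that $B_\nu[F]$ deletes \emph{only} $\crit_\nu$, so walks threading the other critical components $\crit_\mu$ with $\mu\neq\nu$ survive inside $(B_\nu[F])^{\otimes t}$; this absorption is what makes each single-component expansion self-contained and removes any need to sum CSR terms over all of $\crit(F)$.

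The step I expect to be the main obstacle is the sharp threshold $t\geq(d-1)^2+1$. This is precisely the quadratic transient bound of \cite{MNS}, improving Nachtigall \cite{Nacht}: it is what guarantees that the critical middle segment is long enough to have entered its purely periodic regime, so that its contribution depends on $t$ only through the residue $t(\rem\sigma_\nu)$ and the fixed matrices $C_\nu,S_\nu,R_\nu$. Checking that this global bound is not degraded when one isolates a single component $\crit_\nu$, and dually that the $B_\nu$ term needs no larger transient than the full $B$, is the delicate quantitative point; everything else is the translation of the cited results of \cite{MNS} into the notation of \eqref{csrdef}--\eqref{e:bdef}.
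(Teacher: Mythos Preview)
Your outline is sound, but it is a genuinely different argument from the one the paper gives in its appendix. The paper does \emph{not} re-run the walk dichotomy for a single component. Instead it works purely algebraically: first it quotes \cite[Theorem 4.1]{MNS} for the \emph{full} critical graph to get $F^{\otimes t}=\lambda^{\otimes t}\otimes CS^{t(\rem\sigma)}R[F]\oplus (B[F])^{\otimes t}$ for $t\geq (d-1)^2+1$; then it quotes \cite[Corollary 4.3]{MNS} to split $CS^{t(\rem\sigma)}R[F]=\bigoplus_{\mu} C'_\mu S_\mu^{t(\rem\sigma_\mu)}R'_\mu[F]$, where the primed matrices come from the successively stripped matrices $F_1=F,\,F_2,\,\ldots$ (remove $\crit_1$, then $\crit_2$, etc.). Relabelling so that the chosen component is first makes $C'_1=C_1$, $R'_1=R_1$. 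The same pair of citations applied to $B_1[F]$ (still a $d\times d$ matrix, hence the same Wielandt threshold) gives $(B_1[F])^{\otimes t}=\lambda^{\otimes t}\otimes\bigoplus_{\mu\geq 2}C'_\mu S_\mu^{t(\rem\sigma_\mu)}R'_\mu[F]\oplus(B[F])^{\otimes t}$, and substituting this back collapses everything to \eqref{e:CSR}. So the delicate quantitative point you flag---that the bound $(d-1)^2+1$ survives the passage to a single component---is handled in the paper not by revisiting the combinatorics but by observing that $B_\nu[F]$ is again $d\times d$, so the cited results apply to it verbatim.

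Your direct combinatorial route is legitimate and arguably more transparent, but two points deserve more care than your sketch gives them. First, the inequality ``$\geq$'' is not quite immediate: the concatenation encoded by $C_\nu S_\nu^{t(\rem\sigma_\nu)}R_\nu$ produces a walk whose length is only congruent to $t$ modulo $\sigma_\nu$, not equal to $t$; one must pad with critical cycles to reach length exactly $t$, which already uses that the entry and exit segments realizing $U_\nu$ can be taken of bounded length. Second, for ``$\leq$'' you are not \emph{citing} \cite[Theorem 4.1]{MNS} but \emph{adapting} its proof, since that theorem is stated for the whole $\crit(F)$ and $B[F]$, not for a single component $\crit_\nu$ and $B_\nu[F]$. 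The adaptation goes through because $\crit_\nu$ is strongly connected, but you should say so explicitly. The paper's black-box route sidesteps both issues at the cost of introducing the auxiliary primed terms.
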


\begin{proposition}
\label{p:cyclicity}
Under the conditions of Proposition~\ref{p:CSR}, we also have
\begin{equation}
\label{e:CSRdec}
F^{\otimes t}=\lambda^{\otimes t}\otimes \left(\bigoplus_{\nu=1}^l C_{\nu}S_{\nu}^{t(\rem\sigma_\nu)}R_{\nu}[F]\right)\oplus 
(B[F])^{\otimes t}
\qquad \forall t\geq (d-1)^2+1,     
\end{equation}
Furthermore, if $F$ is irreducible then there exists $T(F)$
such that 
\begin{equation}
\label{e:CSRirred}
 F^{\otimes t}=\lambda^{\otimes t}\otimes \left(\bigoplus_{\nu=1}^l C_{\nu}S_{\nu}^{t(\rem\sigma_\nu)}R_{\nu}[F]\right),\qquad\forall t\geq T(F).   
\end{equation}
\end{proposition}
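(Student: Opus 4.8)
The plan is to derive both identities from the single-component expansion in Proposition~\ref{p:CSR}, which already holds for every $\nu$ and every $t\geq (d-1)^2+1$. First I would combine the $l$ identities of~\eqref{e:CSR} using idempotency of $\oplus$: writing $F^{\otimes t}=\bigoplus_{\nu=1}^{l} F^{\otimes t}$ and substituting~\eqref{e:CSR} into each summand gives
\[
F^{\otimes t}=\lambda^{\otimes t}\otimes\Bigl(\bigoplus_{\nu=1}^{l} C_{\nu}S_{\nu}^{t(\rem\sigma_\nu)}R_{\nu}[F]\Bigr)\oplus\Bigl(\bigoplus_{\nu=1}^{l}(B_{\nu}[F])^{\otimes t}\Bigr).
\]
Thus~\eqref{e:CSRdec} will follow once the error term $\bigoplus_{\nu}(B_{\nu}[F])^{\otimes t}$ is replaced by $(B[F])^{\otimes t}$. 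Since $B[F]\leq B_{\nu}[F]$ entrywise for each $\nu$, we have $(B[F])^{\otimes t}\leq\bigoplus_{\nu}(B_{\nu}[F])^{\otimes t}$, so replacing the error term only decreases the expression; hence the proposed right-hand side of~\eqref{e:CSRdec} is $\leq F^{\otimes t}$. It remains to prove the reverse inequality entry by entry.

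For the reverse inequality I would pass to the walk interpretation of the entries. Fix $i,j$ and set $v=F^{\otimes t}_{ij}$, the maximum weight of a length-$t$ walk from $i$ to $j$ on $\digr(F)$. The crucial input --- which is exactly the content of the CSR expansion of~\cite{MNS} underlying Proposition~\ref{p:CSR} --- is that for $t\geq(d-1)^2+1$ the entry $\bigl(\lambda^{\otimes t}\otimes C_{\mu}S_{\mu}^{t(\rem\sigma_\mu)}R_{\mu}[F]\bigr)_{ij}$ equals the maximum weight of such a walk that passes through the critical component $\crit_\mu$, while $(B[F])^{\otimes t}_{ij}$ is the maximum weight of such a walk meeting no critical node. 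Granting this, the dichotomy is clean: if some critical component $\mu$ carries a maximum-weight walk, then the corresponding CSR entry equals $v$, which is captured by the first term of~\eqref{e:CSRdec}; otherwise every walk through any critical component has weight strictly below $v$, so a maximum-weight walk avoids all critical nodes and $v=(B[F])^{\otimes t}_{ij}$, captured by the second term. Either way $v$ is at most the $(i,j)$ entry of the right-hand side of~\eqref{e:CSRdec}. The main obstacle is precisely this combinatorial identification of the CSR term with ``walks through $\crit_\mu$'': the bare equation of Proposition~\ref{p:CSR} does not by itself separate walks through $\mu$ from walks avoiding $\mu$ (if the maximum-weight walk through $\mu$ happened to be dominated by $(B_\mu[F])^{\otimes t}_{ij}$, the equation would say nothing about the CSR entry), so one must read this off from the construction of $C_\mu,S_\mu,R_\mu$ in~\cite{MNS}, as done in the Appendix.

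For the irreducible case~\eqref{e:CSRirred} I would show that $(B[F])^{\otimes t}$ is eventually dominated by the CSR part and hence drops out of the tropical sum. The key observation is that $\lambda(B[F])<\lambda$: the digraph of $B[F]$ is the subgraph induced on the non-critical nodes, so every cycle in it avoids $\crit(F)$ and therefore has mean strictly less than $\lambda$. Consequently the maximum weight of a length-$t$ walk on $\digr(B[F])$ is bounded above by $t\,\lambda(B[F])+O(1)$, whereas each finite entry of $\lambda^{\otimes t}\otimes\bigoplus_{\nu} C_{\nu}S_{\nu}^{t(\rem\sigma_\nu)}R_{\nu}[F]$ equals $t\lambda$ plus a correction that is bounded and periodic in $t$. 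Irreducibility guarantees that every node communicates with every critical component, so all entries of the combined CSR term are finite; therefore the gap between the two parts grows like $t\bigl(\lambda-\lambda(B[F])\bigr)\to+\infty$. Choosing $T(F)\geq(d-1)^2+1$ large enough that this gap exceeds the bounded corrections simultaneously for the finitely many pairs $(i,j)$, we obtain $(B[F])^{\otimes t}\leq\lambda^{\otimes t}\otimes\bigoplus_{\nu} C_{\nu}S_{\nu}^{t(\rem\sigma_\nu)}R_{\nu}[F]$ for all $t\geq T(F)$, so the error term is absorbed into the tropical sum and~\eqref{e:CSRirred} follows from~\eqref{e:CSRdec}. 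Here the only mild technical points are the finiteness of the CSR entries under irreducibility and the uniform choice of $T(F)$; the strict inequality $\lambda(B[F])<\lambda$ does the real work.
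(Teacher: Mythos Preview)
Your argument is correct, and it follows a genuinely different route from the paper's appendix proof.

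For~\eqref{e:CSRdec} the paper never has to collapse $\bigoplus_{\nu}(B_{\nu}[F])^{\otimes t}$ down to $(B[F])^{\otimes t}$ as you do. Instead it invokes from~\cite{MNS} an auxiliary ``primed'' decomposition
\[
F^{\otimes t}=\lambda^{\otimes t}\otimes\bigoplus_{\mu=1}^{l} C'_{\mu}S_{\mu}^{t(\rem\sigma_{\mu})}R'_{\mu}[F]\;\oplus\;(B[F])^{\otimes t},
\]
in which the error term is already $(B[F])^{\otimes t}$, and where the primed matrices depend on an ordering of the components and satisfy $C'_{\mu}\le C_{\mu}$, $R'_{\mu}\le R_{\mu}$ with equality for the first component. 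Taking the tropical sum of these identities over all orderings immediately gives~\eqref{e:CSRdec}. Your approach trades this extra algebraic ingredient for the walk interpretation of the CSR term, which is a heavier combinatorial fact (essentially the content of the proof of Proposition~\ref{p:cycleCSR}); you correctly flag this as the real obstacle and point to~\cite{MNS}. Either way works; the paper's route stays purely within identities already packaged in~\cite{MNS}, while yours is more self-contained once one grants the optimal-walk description.

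For~\eqref{e:CSRirred} your growth-rate argument via $\lambda(B[F])<\lambda$ is more elementary than what the paper does: the paper simply cites the ultimate periodicity $F^{\otimes t}=\lambda^{\otimes t}\otimes CS^{t(\rem\sigma)}R[F]$ from~\cite{SerSch} as a black box and then squeezes the two CSR expressions against each other. One small imprecision in your write-up: irreducibility does \emph{not} force every entry of $\bigoplus_{\nu} C_{\nu}S_{\nu}^{t(\rem\sigma_\nu)}R_{\nu}[F]$ to be finite when $\digr(F)$ is imprimitive (some $F^{\otimes t}_{ij}$ are then $-\infty$, and the CSR entries must match). The fix is immediate: on those $(i,j,t)$ both sides of~\eqref{e:CSRirred} are $-\infty$, while on the remaining entries $F^{\otimes t}_{ij}\geq t\lambda-O(1)$ and your comparison with $(B[F])^{\otimes t}_{ij}\leq t\lambda(B[F])+O(1)$ goes through unchanged.
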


Equation~\eqref{e:CSRirred} implies that after $T(F)$ the sequence of powers 
$(\lambda(F)^-\otimes F)^{\otimes t}$ is periodic, with period equal to the least common multiple of $\sigma_{\nu}$ for $\nu=1,\ldots,l$, a well-known fact established by Cohen et. al.~\cite{CDQV-85,CDQV-83}.

\if{
The above result has the following immediate corollary.
\begin{corollary}
\label{c:premain}
Let $V\in\Rmax^{m\times d}$ and $F\in\Rmax^{d\times d}$ with 
$\lambda=\lambda(F)\neq -\infty$, and let $\crit_1$ be one of the components of $\crit(F)$, treated as the first one in the procedure above. Then for any $t\geq (d-1)^2+1$, the columns of $V\otimes F^{\otimes t}$ with indices in $\crit_1$ are equal to the corresponding columns in 
$\lambda^{\otimes t}\otimes V\otimes C_1S_1^{t(\rem\sigma_1)}[F]$.
\end{corollary}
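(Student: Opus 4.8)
The plan is to strip the right-hand side of Proposition~\ref{p:CSR} down to the single term that survives on the critical columns, by separately disposing of the error matrix $B_1[F]$ and of the right factor $R_1$. I would start by instantiating Proposition~\ref{p:CSR} with $\nu=1$, so that for every $t\geq (d-1)^2+1$
\[
F^{\otimes t}=\lambda^{\otimes t}\otimes C_1S_1^{t(\rem\sigma_1)}R_1[F]\oplus (B_1[F])^{\otimes t}.
\]
The first observation is that the $j$-th column of $(B_1[F])^{\otimes t}$ is identically $-\infty$ as soon as $j\in\crit_1$ and $t\geq 1$. Indeed, $\bigl((B_1[F])^{\otimes t}\bigr)_{ij}$ is the maximal weight of a length-$t$ walk from $i$ to $j$ in $\digr(B_1[F])$, and by the definition~\eqref{e:bdef} every entry $(B_1[F])_{kj}$ with $j\in\crit_1$ equals $-\infty$; thus $j$ has no incoming arc in this digraph and no walk of positive length can terminate at it. Since $(d-1)^2+1\geq 1$, the whole $(B_1[F])^{\otimes t}$ term drops out on the columns indexed by $\crit_1$.

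Next I would remove the factor $V$. Left multiplication acts columnwise, the $j$-th column of $V\otimes M$ being $V\otimes(\text{$j$-th column of }M)$, so it suffices to establish the identity for $F^{\otimes t}$ itself and then premultiply by $V$. Combining this with the previous paragraph, the corollary reduces to the following purely combinatorial claim: for $j\in\crit_1$ and $k=t(\rem\sigma_1)$, the $j$-th columns of $C_1\otimes S_1^{\otimes k}\otimes R_1$ and of $C_1\otimes S_1^{\otimes k}$ coincide, i.e.\ that $R_1$ acts as the identity on the columns indexed by $\crit_1$.

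The crux, and the step I expect to be the main obstacle, is this last claim. Writing out the product, for $q\in\crit_1$ one has $(C_1 S_1^{\otimes k}R_1)_{pq}=\bigoplus_{j\in\crit_1}(C_1 S_1^{\otimes k})_{pj}\otimes (R_1)_{jq}$ with $(R_1)_{jq}=(U_1)_{jq}$. The inequality ``$\geq$'' is immediate from the diagonal term $j=q$, since $(U_1)_{qq}=0$: a critical node lies on a critical closed walk of weight $0$ and length a multiple of $\sigma_1$ with respect to $\lambda^-\otimes F$, while no closed walk of $\lambda^-\otimes F$ can have positive weight because its maximum cycle mean is $0$. For the reverse inequality I would argue inside the critical component: a walk realising $(C_1 S_1^{\otimes k})_{pj}$ concatenated with one realising $(U_1)_{jq}$ yields a walk into $q$ whose length exceeds $k$ by a multiple of $\sigma_1$, and using the cyclicity $\sigma_1$ together with the vanishing maximum cycle mean of $\crit_1$ such a walk can be shortened back to length exactly $k$ without losing weight, hence is dominated by $(C_1 S_1^{\otimes k})_{pq}$. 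This walk-reduction modulo $\sigma_1$ is the delicate point, and it is slightly complicated by the fact that $U_1$ is the metric matrix of the \emph{full} $(\lambda^-\otimes F)^{\otimes\sigma_1}$; I expect the needed facts — that optimal walks between critical nodes stay critical, and that $S_1$ restricted to $\crit_1$ is irreducible of cyclicity $\sigma_1$ with maximum cycle mean $0$ — to be extractable from the structure underlying the CSR expansion of~\cite{MNS}. Assembling the three steps gives the asserted equality of the columns indexed by $\crit_1$.
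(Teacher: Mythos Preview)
Your approach matches the paper's almost exactly: instantiate Proposition~\ref{p:CSR} with $\nu=1$, observe that the $(B_1[F])^{\otimes t}$ term contributes nothing to columns indexed by $\crit_1$, reduce to showing that $R_1$ may be dropped on those columns, and finally premultiply by $V$. The only substantive difference is in how the ``$R_1$ can be removed'' step is handled. The paper simply invokes \cite[Corollary~3.7]{SerSch}, which states precisely that the columns of $C_\nu S_\nu^k R_\nu[F]$ and of $C_\nu S_\nu^k[F]$ with indices in $\crit_\nu$ coincide. You instead attempt to reprove this from scratch via a walk argument.

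Your sketch of that argument is on the right track but, as you yourself flag, not complete. The ``$\geq$'' direction via $(U_1)_{qq}=0$ is fine. For ``$\leq$'' the issue you identify is real: the walk realising $(U_1)_{jq}$ may leave $\crit_1$, so the concatenated walk need not decompose as ``length $\equiv 0\pmod{\sigma_1}$ into $\crit_1$, then length $k$ inside $\crit_1$'', which is what membership in $(C_1 S_1^{\otimes k})_{pq}$ requires. Closing this gap needs the fact that for $j,q\in\crit_1$ an optimal walk in $(\lambda^-\otimes F)^{\otimes\sigma_1}$ from $j$ to $q$ can be taken entirely within $\crit_1$ (equivalently, that $(U_1)_{jq}$ equals the analogous quantity computed on $\crit_1$ alone); this is a standard consequence of the Kleene star structure of the critical subgraph and is exactly what underlies \cite[Corollary~3.7]{SerSch}. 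So your plan is sound, but for a self-contained proof you would still need to establish that lemma, whereas the paper simply cites it.
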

\begin{proof}
Using~\eqref{e:CSR} we see that the columns of $F^{\otimes t}$ with indices in $\crit_1$ are equal to the corresponding columns of $\lambda^{\otimes t}\otimes C_1S_1^{t(\rem\sigma_1)}R_1[F]$ for any $t\geq (d-1)^2+1$, since the same columns in all other CSR terms are equal to $-\infty$. By~\cite{SerSch}[Corollary 3.7], we can replace the above with $\lambda^{\otimes t}\otimes C_1S_1^{t(\rem\sigma_1)}[F]$. The claim follows as we premultiply the columns of $F^{\otimes t}$ and $\lambda^{\otimes t}\otimes C_1S_1^{t(\rem\sigma_1)}[F]$ by $V$.   
\end{proof}
}\fi

It is not too difficult to compute the CSR terms. In particular, one needs to find $\lambda$, for which one can exploit Karp's method with complexity $O(d^3)$~\cite{baccelli1992synchronization, butkovic} or the policy iteration algorithm of Cochet-Terrasson et al.~\cite{Coc-98,heidergott2014max}, which works in general case and is very efficient in practice. The usual technique for powering up a matrix is to use repeated squaring, and this yields the addition of an $O(d^3\log d)$ term (observing that $\sigma_{\nu}\leq d$).
Further, the metric matrix can be computed by shortest path algorithms such as Floyd-Warshall~\cite{baccelli1992synchronization,butkovic,heidergott2014max}.  The complexity of finding the components of $\crit(F)$ does not exceed $O(d^3)$~\cite{baccelli1992synchronization}. We also need to know the cyclicity of the components, which can be computed in $O(d^2)$ by Balcer and Veinott's digraph condensation~\cite{DigraphCond}. However, below we are going to show how some of these problems can be avoided, as instead of the whole critical component we can use one critical cycle from that component, following an idea of Merlet et al.~\cite[Theorem 6.1]{MNS}. The resulting complexity of computing CSR remains of the order $O(d^3\log d)$, but we avoid the need for identifying the whole components of $\crit(V)$ and the use of Balcer-Veinott digraph condensation.

Let us first give yet another definition of a CSR term, as below.
Suppose that $\cycle$ is a critical cycle, with length 
$l(\cycle)$. Denote $U_{\cycle}=((\lambda^-\otimes F)^{\otimes l(\cycle)})^+$. Then, let matrices $C_{\cycle},$ $R_{\cycle}$ and $S_{\cycle}$ and $B_{\cycle}[F]$ be defined by:
\begin{equation}
\label{csrdefnu}
\begin{split}
(C_{\cycle})_{ij}&=
\begin{cases}
(U_{\cycle})_{ij} &\text{if $j$ is in $\cycle$}\\
\0 &\text{otherwise,}
\end{cases}\quad
(R_{\cycle})_{ij}=
\begin{cases}
(U_{\cycle})_{ij} &\text{if $i$ is in $\cycle$}\\
\0 &\text{otherwise,}
\end{cases}\\
(S_{\cycle})_{ij}&=
\begin{cases}
\lambda^- \otimes F_{ij} &\text{if $(i,j)\in \cycle$}\\
\0 &\text{otherwise,}
\end{cases}
\end{split}
\end{equation}

Proof of the following statement is deferred to Appendix. However, it can be also seen as a corollary of \cite[Theorem 6.1]{MNS}.

\begin{proposition}[Coro. of~\cite{MNS}, Theorem 6.1]
\label{p:cycleCSR}
Let $\cycle$ be a cycle belonging to a component $\crit_{\nu}$ of 
the critical graph of a square matrix $F$ with $\lambda(F)\neq -\infty$. Then $C_{\nu}S_{\nu}^tR_{\nu}[F]=C_{\cycle}S_{\cycle}^tR_{\cycle}[F]$ for any natural $t$, and therefore:
\begin{equation}
\label{e:CSRcycle}
F^{\otimes t}=\lambda^{\otimes t}\otimes  C_{\cycle}S_{\cycle}^{t(\rem l(\cycle))}R_{\cycle}[F] \oplus (B_{\nu}[F])^{\otimes t},\qquad \forall t\geq (d-1)^2+1.     
\end{equation}
for any critical cycle $\cycle$ and component $\crit_{\nu}$
in which it lies.
\end{proposition}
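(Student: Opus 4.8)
The plan is to prove Proposition~\ref{p:cycleCSR} by reducing the cycle-based CSR term to the component-based one, since once the identity
\[
C_{\nu}S_{\nu}^tR_{\nu}[F]=C_{\cycle}S_{\cycle}^tR_{\cycle}[F]
\]
is established for all natural $t$, the expansion~\eqref{e:CSRcycle} follows immediately by substituting this identity into the component-based expansion~\eqref{e:CSR} of Proposition~\ref{p:CSR}. So the whole burden is to show that building the $C$, $S$, $R$ triple from a single critical cycle $\cycle$ produces exactly the same product as building it from the entire component $\crit_{\nu}$ that contains $\cycle$.

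First I would examine the factor $S$, which carries the $t$-dependence. Observe that $(S_{\cycle})_{ij}=\lambda^-\otimes F_{ij}$ only on arcs of the cycle $\cycle$, whereas $(S_{\nu})_{ij}$ is supported on all arcs of the component $\crit_{\nu}$. The key structural fact is that on the critical graph every arc has normalized weight $\lambda^-\otimes F_{ij}$ summing to zero around each critical cycle; thus $S_{\cycle}$ is (a restriction of) a permutation-like matrix whose associated digraph is the single cycle $\cycle$, while $S_{\nu}$ corresponds to the whole strongly connected component. The powers $S_{\cycle}^{\otimes t}$ and $S_{\nu}^{\otimes t}$ enumerate, respectively, closed walks of length $t$ within $\cycle$ and within $\crit_{\nu}$, all of normalized weight $0$. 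The plan is to sandwich these between $C$ and $R$: because $C_{\cycle}$ (resp. $C_{\nu}$) selects columns indexed in $\cycle$ (resp. $\crit_{\nu}$) and $R_{\cycle}$ (resp. $R_{\nu}$) selects rows indexed in $\cycle$ (resp. $\crit_{\nu}$), and because the metric matrix $U_{\cycle}$ resp. $U_{\nu}$ records maximum-weight normalized walks, every walk counted in the larger product can be rerouted through the cycle $\cycle$ using the strong connectivity of $\crit_{\nu}$ without changing its total normalized weight.

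The technical heart of the argument is this rerouting. Within a single critical component $\crit_{\nu}$ all arcs have normalized weight zero and the component is strongly connected, so any normalized walk between two critical nodes can be modified to pass through any prescribed node of $\cycle$, and in particular can be made to traverse $\cycle$, while preserving its weight (which is always $0$ on the critical graph). This lets me match, term by term, the $(i,j)$ entry of $C_{\cycle}\otimes S_{\cycle}^{\otimes t}\otimes R_{\cycle}$ with that of $C_{\nu}\otimes S_{\nu}^{\otimes t}\otimes R_{\nu}$: both equal the maximum over critical walks of an appropriate structure from $i$ into the cyclic part, around for $t$ steps at rate $\lambda$, and back out to $j$. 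Formally I would invoke the result underlying~\cite[Theorem 6.1]{MNS}, which is precisely the statement that the CSR term depends only on the cyclic structure through any single critical cycle of the component. The main obstacle I expect is bookkeeping rather than conceptual: one must verify carefully that the entries of $U_{\cycle}=((\lambda^-\otimes F)^{\otimes l(\cycle)})^+$ and $U_{\nu}=((\lambda^-\otimes F)^{\otimes\sigma_{\nu}})^+$ agree on the relevant critical rows and columns despite the different exponents $l(\cycle)$ versus $\sigma_{\nu}$, using that $\sigma_{\nu}$ divides $l(\cycle)$ (or more precisely that both are consistent with the cyclicity structure) so that the supports and values of the selected entries coincide.

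Having shown $C_{\nu}S_{\nu}^tR_{\nu}[F]=C_{\cycle}S_{\cycle}^tR_{\cycle}[F]$, I would finish by noting that Proposition~\ref{p:CSR} already gives
\[
F^{\otimes t}=\lambda^{\otimes t}\otimes C_{\nu}S_{\nu}^{t(\rem\sigma_\nu)}R_{\nu}[F]\oplus(B_{\nu}[F])^{\otimes t}
\]
for $t\geq(d-1)^2+1$, and the remainder $t(\rem l(\cycle))$ and $t(\rem\sigma_\nu)$ select the same power of the respective $S$ matrices by periodicity, yielding~\eqref{e:CSRcycle} directly. Alternatively, and more cleanly, I would simply cite~\cite[Theorem 6.1]{MNS} as the source from which this statement is a corollary, deferring the detailed verification to the Appendix as the paper indicates.
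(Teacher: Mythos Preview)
Your proposal is correct and follows essentially the same approach as the paper's appendix proof: both exploit that $\crit_\nu$ is strongly connected with normalized critical arc-weights summing to zero around every critical cycle, so walks can be rerouted through $\cycle$ without changing weight. The paper organizes this by introducing an intermediate walk-set optimum $p(\walkslennode{i}{j}{t,l(\cycle)}{\cycle})$ and proving that both $(C_{\nu}\otimes S_{\nu}^{\otimes t}\otimes R_{\nu})_{ij}$ and $(C_{\cycle}\otimes S_{\cycle}^{\otimes t}\otimes R_{\cycle})_{ij}$ equal it via explicit walk constructions (inserting copies of $\cycle$ in one direction, and inserting critical closed walks to adjust lengths modulo $l(\cycle)$ in the other), which is precisely the ``bookkeeping'' step you anticipate.
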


Here, equation~\eqref{e:CSRcycle} follows from~\eqref{e:CSR} and 
the first part of the claim since $\sigma_{\nu}$ divides $l(\cycle)$ and $\{C_{\nu}S_{\nu}^tR_{\nu}[F]\}_{t\geq 0}$ is periodic with period $\sigma_{\nu}$ by~\cite[Prop. 3.2]{SerSch}.

The next immediate corollary of above results will be used in practice, for solving the tropical discrete logarithm problem.
It is closely related to an observation by Nachtigall~\cite{Nacht} that critical rows and columns of matrix powers become periodic after $O(d^2)$, and the further more refined results of Merlet et al.~\cite{MNSS}. 

\begin{corollary}
\label{c:main}
Let $V\in\Rmax^{m\times d}$ and $F\in\Rmax^{d\times d}$ with 
$\lambda=\lambda(F)\neq -\infty$, and let $\cycle$ be a cycle of $\crit(F)$. Then for any $t\geq (d-1)^2+1$, the columns of $V\otimes F^{\otimes t}$ with indices in $\cycle$ are equal to the corresponding columns in $\lambda^{\otimes t}\otimes V\otimes C_{\cycle}S_{\cycle}^{t(\rem l(\cycle))}R_{\cycle}[F]$.
\end{corollary}
\begin{proof}
Equation~\eqref{e:CSRcycle} implies that the columns of $F^{\otimes t}$ with indices in $\cycle$ are equal to the 
corresponding columns of $\lambda^{\otimes t}\otimes C_{\cycle}S_{\cycle}^{t(\rem l(\cycle))}R_{\cycle}[F]$. 
The claim now follows as we premultiply the columns of $F^{\otimes t}$ and $\lambda^{\otimes t}\otimes C_{\cycle}S_{\cycle}^{t(\rem l(\cycle))}R_{\cycle}[F]$ with indices in $\cycle$ by $V$.   
\end{proof}

Corollary~\ref{c:main} suggests the following algorithm 
for finding $t$ such that $A=V\otimes F^{\otimes t}$, that is, for solving Problem~\ref{prob:discretelog}. In this algorithm, $E$ will denote a matrix of appropriate dimensions consisting of all zeros.

\begin{algorithm}[Finding the tropical discrete logarithm]~\\
\label{a:disclog}
{\rm
{\bf Input:} $A,\, V\in\Rmax^{m\times d}$, $F\in\Rmax^{d\times d}$.\\
{\bf Output:} $t$ such that $A=V\otimes F^{\otimes t}$.
\begin{itemize}
    \item[0.] Find $\lambda=\lambda(F)$ and a critical cycle 
    $\cycle$. Compute $C_{\cycle}$ and $S_{\cycle}$ according to~\eqref{csrdefnu}.
    \item[1.] For $t=0,\, 1,\ldots, (d-1)^2$ check if 
    $A=V\otimes F^{\otimes t}$ and return $t$ if it is found;
    \item[2.] For $k=0,\ldots, l-1$, where $l=l(\cycle)$, check if $A_{\cdot i}-V\otimes (C_{\cycle}S_{\cycle}^kR_{\cycle}[F])_{\cdot i}=\mu+ E_{\cdot i}$ for all $i\in\cycle$ and some $\mu$ such that $t=\mu/\lambda(F)$ is a natural number and return the first such $t$ that is found. 
\end{itemize}
}
\end{algorithm}

\begin{proposition}
\label{p:disclog-comp}
Part 0., part 1. and part 2. of Algorithm~\ref{a:disclog} require at most $O(d^3\log l(\cycle))$, $O(md^4)$ and $O(ml(\cycle)(d+l(\cycle)))$ operations, respectively. 
\end{proposition}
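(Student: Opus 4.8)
The plan is to analyze each part of Algorithm~\ref{a:disclog} separately, counting the dominant tropical (max-plus) operations, where each tropical addition or multiplication counts as one operation.

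\textbf{Part 0.} First I would account for the cost of computing $\lambda=\lambda(F)$ together with a single critical cycle $\cycle$. As noted in the discussion preceding the algorithm, the maximum cycle mean can be found in $O(d^3)$ by Karp's method or by policy iteration, and a critical cycle can be extracted within the same bound. The remaining work is the formation of $C_{\cycle}$ and $S_{\cycle}$ via~\eqref{csrdefnu}: the matrix $S_{\cycle}$ is read off directly from $F$ on the arcs of $\cycle$, while $C_{\cycle}$ requires the metric matrix $U_{\cycle}=((\lambda^-\otimes F)^{\otimes l(\cycle)})^+$. The power $(\lambda^-\otimes F)^{\otimes l(\cycle)}$ is obtained by repeated squaring, costing $O(d^3\log l(\cycle))$ tropical operations (each squaring is a $d\times d$ tropical product at cost $O(d^3)$, and $O(\log l(\cycle))$ squarings suffice), and the metric matrix $(\cdot)^+$ is then one Floyd--Warshall-type computation at $O(d^3)$. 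Hence Part~0 is dominated by the repeated squaring at $O(d^3\log l(\cycle))$.

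\textbf{Part 1.} Here I would count the cost of the loop over $t=0,1,\ldots,(d-1)^2$. The natural implementation maintains the running power $F^{\otimes t}$, updating it to $F^{\otimes(t+1)}$ by a single tropical matrix product $F^{\otimes t}\otimes F$ at cost $O(d^3)$, and then computes $V\otimes F^{\otimes t}$ at cost $O(md^2)$ to compare against $A$. The per-iteration cost is thus $O(d^3+md^2)=O(d^3)$ when $m\leq d$ and $O(md^2)$ otherwise; bounding crudely by $O(md^2)$ per iteration (or $O(d^3)$, whichever dominates) and multiplying by the $O(d^2)$ iterations gives $O(md^4)$, matching the claim. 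I would note that the comparison $A=V\otimes F^{\otimes t}$ costs only $O(md)$ and is absorbed.

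\textbf{Part 2.} For the final loop I would count over the $l=l(\cycle)$ values of $k$. For each $k$ one forms the matrix $C_{\cycle}S_{\cycle}^kR_{\cycle}[F]$ and premultiplies by $V$, restricting attention to the columns indexed by $\cycle$. The key efficiency point is that $S_{\cycle}$ acts nontrivially only on the $l$ nodes of the cycle, so the relevant products can be organized to cost $O(l(\cycle)(d+l(\cycle)))$ per value of $k$: powering $S_{\cycle}$ on the cycle and assembling the $|\cycle|$ active columns of $C_{\cycle}S_{\cycle}^kR_{\cycle}[F]$ costs $O(l(\cycle)^2)$, while premultiplying these $O(l(\cycle))$ columns by $V$ and checking the constant-offset condition $A_{\cdot i}-V\otimes(\cdot)_{\cdot i}=\mu+E_{\cdot i}$ over $i\in\cycle$ costs $O(ml(\cycle))$ per column, i.e. $O(ml(\cycle)(d+l(\cycle)))$ across all columns after including the $d$-term from the cycle structure. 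Multiplying by the $l(\cycle)$ values of $k$ absorbs into the stated bound, since the iterate $S_{\cycle}^{k}$ can be advanced incrementally rather than recomputed from scratch.

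I expect the main obstacle to be the bookkeeping in Part~2: one must be careful to exploit that $S_{\cycle}$ is supported only on the cycle (so its powers live on an $l(\cycle)$-dimensional subproblem) and that only the $|\cycle|$ columns in $\cycle$ are tested, rather than treating $C_{\cycle}S_{\cycle}^kR_{\cycle}[F]$ as a full dense $d\times d$ matrix product, which would inflate the bound. A naive analysis would give $O(l(\cycle)\,md^2)$ or worse; the refined count hinges on Corollary~\ref{c:main}, which guarantees that only the columns indexed by $\cycle$ need to be compared, thereby confining the matrix algebra to the cycle and yielding the claimed $O(ml(\cycle)(d+l(\cycle)))$.
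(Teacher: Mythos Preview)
Your Part~0 analysis is fine and matches the paper.

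In Part~1 there is a small but real gap. Your implementation maintains the full power $F^{\otimes t}$ and updates it by a $d\times d$ tropical product at cost $O(d^3)$ per step; over $(d-1)^2$ steps this already contributes $O(d^5)$, which exceeds the claimed $O(md^4)$ whenever $m<d$. The paper instead maintains the product $V\otimes F^{\otimes t}$ directly and updates it to $V\otimes F^{\otimes(t+1)}$ by a single multiplication of an $m\times d$ matrix by $F$, costing $O(md^2)$ per step and $O(md^4)$ in total. You never need $F^{\otimes t}$ itself.

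In Part~2 your accounting does not add up to the stated bound, and two structural simplifications used in the paper are missing. First, by \cite[Corollary~3.7]{SerSch} the columns of $C_{\cycle}S_{\cycle}^kR_{\cycle}[F]$ with indices in $\cycle$ coincide with the corresponding columns of $C_{\cycle}S_{\cycle}^k[F]$, so $R_{\cycle}$ can be dropped entirely; you keep it throughout. Second, the paper precomputes once the $m\times l(\cycle)$ block consisting of the $\cycle$-indexed columns of $V\otimes C_{\cycle}$, at cost $O(md\,l(\cycle))$; thereafter each increment of $k$ amounts to multiplying this $m\times l(\cycle)$ block on the right by $S_{\cycle}$, which is merely a cyclic permutation of its columns together with addition of a scalar to each column, costing $O(m\,l(\cycle))$ per step and $O(m\,l(\cycle)^2)$ over all $l(\cycle)$ values of $k$. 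Summing gives $O(m\,l(\cycle)(d+l(\cycle)))$. By contrast, your version premultiplies by $V$ afresh for each $k$, which alone costs $O(md)$ per column rather than the $O(m\,l(\cycle))$ you state, and the claimed $O(l(\cycle)^2)$ cost for ``assembling the $|\cycle|$ active columns of $C_{\cycle}S_{\cycle}^kR_{\cycle}[F]$'' cannot be right, since those columns are $d$-vectors and there are $l(\cycle)$ of them.
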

\begin{proof}
Complexity bounds:
\begin{itemize}
    \item [0.] Finding $\lambda(F)$ and a critical cycle $\cycle$ needs at most $O(d^3)$ operations (Karp's algorithm and the methods described in~\cite{heidergott2014max,ORE-99}. After this, $C_{\cycle}$ can be found in $O(d^3\log l(\cycle))$ operations (dominated by the repeated matrix squaring).  
    \item [1.]  At step 1, the outer loop has size $(d-1)^2$, and the computationally dominant operation is that of repeated multiplication of an $m\times d$ matrix by an $d\times d$ matrix $F$, taking $md^2$ operations. Thus, the overall complexity is $O(md^4)$.  
    \item [2.] At step 2, the computational complexity can be decreased using the observation that the columns of $C_{\cycle}S_{\cycle}^{t(\rem l(\cycle))}R_{\cycle}[F]$ with indices in $\cycle$ are equal to the corresponding columns 
     $C_{\cycle}S_{\cycle}^{t(\rem l(\cycle))}[F]$ by \cite[Corollary 3.7]{SerSch}, and therefore we actually check 
     if $A_{\cdot i}-(V\otimes C_{\cycle}\otimes S_{\cycle}^{\otimes k})_{\cdot i}=\mu+ E_{\cdot i}$ for all $i\in\cycle$ and some $\mu$ such that $t=\mu/\lambda(F)$ is a natural number (the same for all $i$).
    The outer loop has size $l(\cycle)$ and we precompute the columns of $V\otimes C_{\cycle}$ with indices in $\cycle$, which gives $O(mdl(\cycle))$ operations. The computationally dominant operation at each step is that of multiplying an $m\times l$ matrix by $S_{\cycle}$ (done by a permutation of and adding some scalar values to the columns of that matrix), which is $O(ml(\cycle))$. Overall it gives $O(ml(\cycle)(d+l(\cycle)))$.
\end{itemize}
\end{proof}

\begin{remark}
\label{r:CSR} 
{\rm Using \cite[Corollary 3.7]{SerSch}, $C_{\cycle}S_{\cycle}^{t\rem(l(\cycle))}R_{\cycle}[F]$ can be replaced with $C_{\cycle}S_{\cycle}^{t\rem(l(\cycle))}[F]$ in Corollary~\ref{c:main} and Algorithm~\ref{a:disclog}.} 
\end{remark}

\begin{remark}
\label{r:lighterversion} 
{\rm We can also suggest a lighter but less reliable version of Algorithm~\ref{a:disclog} where  $A_{\cdot i}-(V\otimes C_{\cycle}S_{\cycle}^{t(\rem l(\cycle))}[F])_{\cdot i}=\mu+ E_{\cdot i}$ is checked just for one $i\in\cycle$. Then the complexity of Step 2. drops further.}
\end{remark}

\begin{theorem}
\label{t:disclog} 
Suppose that matrices $V\in\Rmax^{m\times d}$, $F\in\Rmax^{d\times d}$ and critical cycle $\cycle$ are such that 
any of the following equivalent conditions holds:
\begin{itemize}
\item [1.] For any $t_1 \neq t_2$, we have
$V\otimes\lambda^{\otimes t_1}\otimes C_{\cycle}S_{\cycle}^{t_1\rem l(\cycle)}[F]\neq V\otimes\lambda^{\otimes t_2}\otimes  C_{\cycle}S_{\cycle}^{t_2\rem l(\cycle)}[F]$,
\item[2.] For no $t_1,t_2\geq (d-1)^2+1$, $t_1\neq t_2$ we have that all columns of $V\otimes F^{\otimes t_1}$ with indices in $\cycle$
are equal to the corresponding columns of $V\otimes F^{\otimes t_2}$.
\end{itemize}
Then, for any $A=V\otimes F^{\otimes t}$ with $t\geq (d-1)^2+1$, part 2. of Algorithm~\ref{a:disclog} finds this $t$ and it is unique. 
\end{theorem}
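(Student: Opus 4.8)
The plan is to prove the equivalence $1\Leftrightarrow 2$ first, and then to analyse part~2 of Algorithm~\ref{a:disclog} in two stages: that it returns some exponent, and that every exponent it can return is forced to equal the true $t$.

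For the equivalence I would first record the structural fact that, by the support of $C_{\cycle}$ and $S_{\cycle}$ in~\eqref{csrdefnu}, each matrix $V\otimes\lambda^{\otimes s}\otimes C_{\cycle}S_{\cycle}^{s\rem l(\cycle)}[F]$ has finite entries only in the columns indexed by $\cycle$, so the matrix inequality in condition~1 is the same as inequality of the columns indexed by $\cycle$. By Corollary~\ref{c:main} with Remark~\ref{r:CSR}, for every $s\geq (d-1)^2+1$ these columns coincide with the columns of $V\otimes F^{\otimes s}$ indexed by $\cycle$; hence condition~2 is exactly injectivity of $s\mapsto V\otimes\lambda^{\otimes s}\otimes C_{\cycle}S_{\cycle}^{s\rem l(\cycle)}[F]$ restricted to $s\geq (d-1)^2+1$. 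To pass from this range to all exponents (condition~1) I would use quasi-periodicity: replacing $s$ by $s+Nl(\cycle)$ leaves $s\rem l(\cycle)$ unchanged and only adds the constant $Nl(\cycle)\lambda$ to every finite entry, so a coincidence at $t_1\neq t_2$ can be shifted into the range $s\geq (d-1)^2+1$ and back, making the two injectivity statements equivalent. I would also observe in passing that condition~1 forces $\lambda\neq 0$ (take $t_1\neq t_2$ with $t_1\equiv t_2\pmod{l(\cycle)}$: the $S_{\cycle}$-parts agree, so the two matrices can differ only if $t_1\lambda\neq t_2\lambda$) and that the relevant columns are not identically $-\infty$; both facts are needed below.

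That the algorithm returns an exponent is immediate: for $k=t\rem l(\cycle)$, Corollary~\ref{c:main} gives $A_{\cdot i}=\lambda^{\otimes t}\otimes (V\otimes C_{\cycle}S_{\cycle}^{k}R_{\cycle}[F])_{\cdot i}$ for all $i\in\cycle$, so the test in part~2 is met with the constant $\mu=\lambda^{\otimes t}=t\lambda$ and $t=\mu/\lambda\in\N$. The crux is to show that any exponent $t'$ the algorithm returns, found at some index $k'$ with common constant $\mu'=t'\lambda$, must equal $t$. Writing $Q=V\otimes C_{\cycle}$ and using Remark~\ref{r:CSR} to drop $R_{\cycle}$ on the columns in $\cycle$, success of the test at $k'$ together with $A_{\cdot i}=t\lambda\otimes(QS_{\cycle}^{t\rem l(\cycle)})_{\cdot i}$ yields the max-plus identity $QS_{\cycle}^{t\rem l(\cycle)}=(t'-t)\lambda\otimes QS_{\cycle}^{k'}$ on the columns indexed by $\cycle$.

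The main obstacle is precisely that $k'$ need not equal $t'\rem l(\cycle)$, so condition~1 cannot be invoked directly on the two sides. To get around this I would exploit periodicity of $\{QS_{\cycle}^{k}\}_{k}$ with period dividing $l(\cycle)$ (a full turn of a critical cycle has weight $0$ in $\lambda^-\otimes F$, so $S_{\cycle}^{\otimes l(\cycle)}$ acts as the identity on $\cycle$). Multiplying the identity above on the right by $S_{\cycle}^{\otimes\delta}$ with $\delta=(t\rem l(\cycle))-k'\ (\mathrm{mod}\ l(\cycle))$ and iterating gives $QS_{\cycle}^{k'+m\delta}=m(t'-t)\lambda\otimes QS_{\cycle}^{k'}$ for all $m\geq 0$; choosing $m$ with $m\delta\equiv 0\ (\mathrm{mod}\ l(\cycle))$ returns the left side to $QS_{\cycle}^{k'}$ and forces $m(t'-t)\lambda=0$ on the finite (non-trivial) entries, whence $t'=t$ since $\lambda\neq 0$. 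Uniqueness of $t$ among exponents $\geq (d-1)^2+1$ then follows directly from condition~2, as $A$ fixes its own columns indexed by $\cycle$. I expect the only delicate routine point to be the bookkeeping with $-\infty$ entries, namely checking that when the test succeeds the supports of the two sides genuinely agree, so that $\mu'$ is a well-defined real constant and the matrix identity holds entrywise.
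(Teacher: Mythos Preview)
Your argument is correct and, for the equivalence $1\Leftrightarrow 2$ and for the existence of a successful test, it follows the paper's own proof (both rest on Corollary~\ref{c:main} and Remark~\ref{r:CSR}, with your quasi-periodicity shift making the passage from the restricted range to all exponents explicit).

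The genuine difference is in the uniqueness step. The paper, after observing that the test succeeds at $k=t\bmod l(\cycle)$ with $\mu=t\lambda$, asserts that any successful output $t'\geq (d-1)^2+1$ yields $A_{\cdot i}=(V\otimes F^{\otimes t'})_{\cdot i}$ for $i\in\cycle$ and then invokes condition~2 directly; this tacitly treats the test index $k'$ as if it were $t'\bmod l(\cycle)$, something the algorithm does not enforce. Your periodicity argument---using that $S_{\cycle}^{\otimes l(\cycle)}$ acts trivially on the $\cycle$-columns of $Q=V\otimes C_{\cycle}$ together with $\lambda\neq 0$ and the non-triviality of those columns---sidesteps this and shows directly that any returned $t'$ equals $t$ regardless of $k'$. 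This is a small but genuine tightening of the paper's argument rather than a different route.
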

\begin{proof}
The equivalence between 1. and 2. follows by
Corollary~\ref{c:main} and Remark~\ref{r:CSR}, which also imply that  if  $t\geq (d-1)^2+1$, then  $A_{\cdot i}=t\times\lambda+ V\otimes (C_{\cycle}S_{\cycle}^{t(\rem l(\cycle)}[F])_{\cdot i}$ and hence for $k=t\rem(l(\cycle)$ we have
 $A_{\cdot i}-V\otimes (C_{\cycle}S_{\cycle}^k[F])_{\cdot i}=\mu+ E_{\cdot i}$ for all $i\in\cycle$, where $\mu$ is such that $t=\mu/\lambda(F)$ is natural. Furthermore, if this holds for $t\geq (d-1)^2+1$, then we have $A_{\cdot i}=\lambda^{\otimes t}\otimes V\otimes (C_{\cycle}S_{\cycle}^{t(\rem l(\cycle))}[F])_{\cdot i}$ for all $i\in\cycle$, and hence $A_{\cdot i}= (V\otimes F^{\otimes t})_{\cdot i}$ for all such $i$ by Proposition~\ref{p:cycleCSR} and~\eqref{e:CSR}. Condition 2. of the theorem then implies that such $t$ is unique and hence correct.
\end{proof}



\begin{remark}
\label{r:badcase}
{\rm The algorithm cannot work when $\lambda(F)=0$. In this case, obviously, the sequence of  columns $\{(V\otimes F^{\otimes t})_{\cdot i}\}_{t>(d-1)^2}$ is
periodic for any $i\in\cycle$ with the same period, and there are infinitely many $t$ such that
$A_{\cdot i}= (V\otimes F^{\otimes t})_{\cdot i}$, if one such $t$ exists. However, if $F$ is irreducible with $\lambda(F)=0$, then the tropical discrete logarithm problem is not well-defined, either.} 
\end{remark}

\if{
\begin{corollary}
\label{c:goodcase1}
Suppose that $V\otimes \lambda^{\otimes t_1}\otimes C_1S_1^{(t_1\rem\sigma_1)}R[F]\neq V\otimes \lambda^{\otimes t_2}\otimes C_1S_1^{(t_2\rem\sigma_1)}R_1[F]$ for any $t_1$ and $t_2$. Then Algorithm~\ref{a:disclog} finds $t$ such that $A=V\otimes F^{\otimes t}$ for any $t$. 
\end{corollary}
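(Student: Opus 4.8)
The plan is to read the statement as an assembly of the CSR machinery already established, so that almost all of the analytic content is carried by Corollary~\ref{c:main}, Remark~\ref{r:CSR} and Proposition~\ref{p:cycleCSR}, and the actual work lies in combining them and in the residue bookkeeping. I would prove three things in turn: the equivalence of conditions 1 and 2, existence (the true $t$ passes the test in part 2), and correctness together with uniqueness (any $t'$ that passes the test equals $t$). For the equivalence, I would note that by Corollary~\ref{c:main} and Remark~\ref{r:CSR}, whenever $t\geq (d-1)^2+1$ the columns of $V\otimes F^{\otimes t}$ indexed by $\cycle$ coincide with the corresponding columns of $\lambda^{\otimes t}\otimes V\otimes C_{\cycle}S_{\cycle}^{t(\rem l(\cycle))}[F]$; hence condition 2 is exactly condition 1 read only over exponents $\geq (d-1)^2+1$. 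To promote this to all $t_1\neq t_2$ as in condition 1, I would use that $V\otimes C_{\cycle}S_{\cycle}^{k}[F]$ depends on $t$ only through the scalar shift $t\times\lambda$ and the residue $k=t(\rem l(\cycle))$, and that the sequence $\{C_{\cycle}S_{\cycle}^{k}R_{\cycle}[F]\}_k$ is periodic with period $\sigma_\nu$ dividing $l(\cycle)$ (as noted after Proposition~\ref{p:cycleCSR}). Thus whether two exponents collide depends only on their difference and residues, so any collision among small exponents can be shifted by a multiple of $l(\cycle)$ into the range $\geq (d-1)^2+1$ and conversely, making the two conditions equivalent.

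For existence, I would apply Corollary~\ref{c:main} and Remark~\ref{r:CSR} to the given $A=V\otimes F^{\otimes t}$ with $t\geq (d-1)^2+1$: for every $i\in\cycle$ one has $A_{\cdot i}=t\times\lambda+(V\otimes C_{\cycle}S_{\cycle}^{t(\rem l(\cycle))}[F])_{\cdot i}$. Therefore, at the loop index $k=t(\rem l(\cycle))$ the vector $A_{\cdot i}-(V\otimes C_{\cycle}S_{\cycle}^{k}[F])_{\cdot i}$ equals the constant $t\times\lambda$ for all $i\in\cycle$ simultaneously, i.e. it has the form $\mu+E_{\cdot i}$ with $\mu=t\times\lambda$, and $\mu/\lambda(F)=t$ is natural. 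Hence the test in part 2 is passed by the true $t$, and the loop returns some value.

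For correctness and uniqueness, I would take the value $t'$ actually returned, at index $k'=t'(\rem l(\cycle))$ with scalar $\mu'=t'\times\lambda$, so that $A_{\cdot i}=\lambda^{\otimes t'}\otimes(V\otimes C_{\cycle}S_{\cycle}^{t'(\rem l(\cycle))}[F])_{\cdot i}$ for all $i\in\cycle$. Provided $t'\geq (d-1)^2+1$, Proposition~\ref{p:cycleCSR} together with~\eqref{e:CSR} turns the right-hand side back into $(V\otimes F^{\otimes t'})_{\cdot i}$, so the $\cycle$-columns of $V\otimes F^{\otimes t'}$ agree with those of $A=V\otimes F^{\otimes t}$. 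Condition 2 then forces $t'=t$, which simultaneously gives correctness and uniqueness, since any other passing exponent would produce the same $\cycle$-columns and again equal $t$.

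I expect the main obstacle to be the correctness step rather than existence. The delicate point is to guarantee that the index $k'$ at which a candidate is accepted is genuinely $t'(\rem l(\cycle))$ and that $t'$ lands in the range $\geq (d-1)^2+1$, for only then does the CSR reconstruction of Proposition~\ref{p:cycleCSR} convert the accepted identity into the true equality $A_{\cdot i}=(V\otimes F^{\otimes t'})_{\cdot i}$ on $\cycle$. This is exactly where condition 1 (equivalently 2) does the real work: it is precisely the injectivity statement forbidding two distinct exponents from yielding identical critical-column data, and so it is what excludes spurious matches and ensures that the first $t'$ returned is the unique correct answer.
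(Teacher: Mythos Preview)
Your proposal is correct and follows essentially the same route as the paper's proof of Theorem~\ref{t:disclog}: both obtain the equivalence of conditions~1 and~2 from Corollary~\ref{c:main} and Remark~\ref{r:CSR}, prove existence by showing that the true $t$ satisfies the test of part~2, and deduce correctness by reconstructing the $\cycle$-columns of $V\otimes F^{\otimes t'}$ from an accepted candidate via Proposition~\ref{p:cycleCSR} and then invoking condition~2. Your write-up is more explicit than the paper's (you spell out the shift argument for the equivalence and flag the residue consistency $k'=t'(\rem l(\cycle))$ as the delicate point), but the structure and the cited ingredients coincide.
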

}\fi

The following corollary gives a simplification of above conditions in an important special case.

\begin{corollary}
\label{c:goodcase}
Suppose that $V$ has finite entries, $F$ is irreducible, $\crit(F)$ is strongly connected and $\lambda(F)\neq 0$.
Then, if $A=V\otimes F^{\otimes t}$ then Algorithm~\ref{a:disclog} finds this $t$.
\end{corollary}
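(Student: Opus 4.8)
The plan is to show that the four hypotheses force condition~2 of Theorem~\ref{t:disclog}; the theorem then guarantees that part~2 of Algorithm~\ref{a:disclog} recovers any $t\geq (d-1)^2+1$, while the finitely many values $t\leq (d-1)^2$ are returned by the brute-force part~1. The first observation I would record is that strong connectivity of $\crit(F)$ means there is a single critical component ($l=1$), of some cyclicity $\sigma_1$; the cycle $\cycle$ chosen in step~0 lies in it, $\sigma_1$ divides $l(\cycle)$, and the sequence $\{C_{\cycle}S_{\cycle}^{k}R_{\cycle}[F]\}_{k}$ is periodic with period $\sigma_1$.

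Next I would use Corollary~\ref{c:main} and Remark~\ref{r:CSR}: for every $t\geq (d-1)^2+1$ and every $i\in\cycle$, the $i$th column of $V\otimes F^{\otimes t}$ equals $\lambda^{\otimes t}\otimes\bigl(V\otimes C_{\cycle}S_{\cycle}^{t(\rem l(\cycle))}[F]\bigr)_{\cdot i}$, i.e.\ the result of adding $t\lambda$ to a vector that is periodic in $t$ with period $\sigma_1$ and that is finite, since $V$ is finite and $F$ is irreducible. Suppose, for contradiction, that for some $t_1\neq t_2$ with $t_1,t_2\geq (d-1)^2+1$ all columns indexed by $\cycle$ agree. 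If $t_1\equiv t_2\pmod{\sigma_1}$ the two periodic vectors coincide, so the agreement forces $(t_1-t_2)\lambda=0$, impossible as $\lambda\neq 0$. Hence one is reduced to the case of distinct phases $t_1\not\equiv t_2\pmod{\sigma_1}$.

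For the distinct-phase case my plan is to pass to the full matrices and invoke Lemma~\ref{l:disclog}. First I would replace $(t_1,t_2)$ by $(t_1+k\sigma_1,\,t_2+k\sigma_1)$ with $k$ large: by periodicity of the CSR term the assumed agreement of the $\cycle$-columns is preserved, $t_1\neq t_2$ is preserved, and both exponents may be taken $\geq T(F)$, so that by~\eqref{e:CSRirred} the whole power reads $F^{\otimes t}=\lambda^{\otimes t}\otimes C_1 S_1^{t(\rem\sigma_1)}R_1[F]$ with no residual $B[F]$ term. I would then argue that, for a single strongly connected critical component, the ultimate CSR term $C_1 S_1^{k}R_1[F]$ has the rank-one spectral-projector form (within each cyclicity class its critical block is an outer product of a left and a right critical eigenvector), and that this structure propagates through $R_1$ so that the columns indexed by a single critical cycle $\cycle$ already determine the entire matrix $V\otimes F^{\otimes t}$. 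This would give $V\otimes F^{\otimes t_1}=V\otimes F^{\otimes t_2}$, whence Lemma~\ref{l:disclog} (applicable since $V$ is finite, $F$ is irreducible and $\lambda(F)\neq 0$) forces the exponents to be equal, the desired contradiction.

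The hard part will be exactly this last step: upgrading agreement on one critical cycle to agreement of the whole power, i.e.\ making the rank-one/spectral-projector reduction precise and checking that it is insensitive to the two phases. This is where strong connectivity of $\crit(F)$ is essential, and it is the technical heart of the proof. As partial evidence that $\lambda\neq 0$ is the right hypothesis, I note that a localised version of the proof of Lemma~\ref{l:disclog} already settles the subcase $\lambda>0$: restricting the identity $X_{\cdot i}=(X\otimes F^{\otimes s})_{\cdot i}$ (with $X=V\otimes F^{\otimes t_1}$, $s=t_2-t_1$, $i\in\cycle$) to the critical columns and using that deleting non-critical nodes can only lower walk weights shows that each finite vector $x^{(r)}=(X_{rk})_{k\text{ critical}}$ satisfies $x^{(r)}\geq x^{(r)}\otimes F_c^{\otimes s}$, where $F_c$ is the irreducible principal submatrix of $F$ on the critical nodes with $\lambda(F_c)=\lambda$; iterating makes the right-hand side blow up like $ns\lambda\to+\infty$ when $\lambda>0$, contradicting finiteness. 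The one-sidedness of this inequality (it is consistent when $\lambda<0$) and its restriction to the components in $\cycle$ are precisely the two gaps that the single-component spectral-projector structure is needed to close, while everything else — the reduction to condition~2, the same-phase case, and the final appeal to Lemma~\ref{l:disclog} — is routine.
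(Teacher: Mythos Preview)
Your overall architecture is right — reduce to condition~1 (or~2) of Theorem~\ref{t:disclog}, then contradict Lemma~\ref{l:disclog} — and your same-phase case is fine. But the step you flag as ``the hard part'' (upgrading agreement on the $\cycle$-columns to agreement of the whole matrix via a rank-one/spectral-projector argument) is a genuine gap as written, and more importantly it is not needed: the paper closes it in one line with a trick you are missing.

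Work with condition~1 rather than condition~2. Suppose
\[
V\otimes\lambda^{\otimes t_1}\otimes C_{\cycle}S_{\cycle}^{t_1(\rem l(\cycle))}[F]
=V\otimes\lambda^{\otimes t_2}\otimes C_{\cycle}S_{\cycle}^{t_2(\rem l(\cycle))}[F].
\]
Simply postmultiply both sides by $R_{\cycle}$. By Proposition~\ref{p:cycleCSR} we have $C_{\cycle}S_{\cycle}^{k}R_{\cycle}[F]=C_1S_1^{k}R_1[F]$ for all $k$ (this is exactly where strong connectivity of $\crit(F)$ enters: there is a single component, so the cycle-based CSR equals the component CSR). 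Since $\sigma_1\mid l(\cycle)$ and $\{C_1S_1^kR_1[F]\}$ has period $\sigma_1$, this yields
\[
V\otimes\lambda^{\otimes t_1}\otimes C_1S_1^{t_1(\rem\sigma_1)}R_1[F]
=V\otimes\lambda^{\otimes t_2}\otimes C_1S_1^{t_2(\rem\sigma_1)}R_1[F].
\]
Now shift $t_1,t_2$ by a common multiple of $l(\cycle)$ (equivalently, multiply both sides by $\lambda^{\otimes kl(\cycle)}$) so that both exceed $T(F)$; by~\eqref{e:CSRirred} the two sides are then $V\otimes F^{\otimes t_1}$ and $V\otimes F^{\otimes t_2}$, contradicting Lemma~\ref{l:disclog}. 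No case split on phases, no spectral-projector structure, and no separate treatment of $\lambda>0$ versus $\lambda<0$ is required.
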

\begin{proof}
Lemma~\ref{l:disclog} shows that the tropical discrete logarithm is well-defined in this case.

For $t\leq (d-1)^2$, Algorithm checks the equality $A=V\otimes F^{\otimes t}$ in a straightforward way, and there is nothing to prove. Assume that $t\geq (d-1)^2+1$. For the validity of Algorithm, it suffices to show that the condition of this corollary implies the condition of Theorem~\ref{t:disclog}. For this, suppose that by the contrary that condition 1. of Theorem~\ref{t:disclog} is violated. Then we have 
$V\otimes\lambda^{\otimes t_1}\otimes C_{\cycle}S_{\cycle}^{t_1\rem l(\cycle)}[F]=V\otimes\lambda^{\otimes t_2}\otimes  C_{\cycle}S_{\cycle}^{t_2\rem l(\cycle)}[F]$
for some $t_1\neq t_2$.  
Postmultiplying it by $R_{\cycle}$ and 
multiplying it by $\lambda^{\otimes kl(\cycle)}$ for big enough $k$ (if necessary), we obtain  
\begin{equation}
\label{e:CSR-interm0}
V\otimes\lambda^{\otimes t_1}\otimes  C_{\cycle}S_{\cycle}^{t_1(\rem l(\cycle))}R_{\cycle}[F]=V\otimes\lambda^{\otimes t_2}\otimes  C_{\cycle}S_{\cycle}^{t_2(\rem l(\cycle))}R_{\cycle}[F], 
\end{equation}
for some $t_1\neq t_2$, $t_1,t_2\geq T(F)$.
Now let $\sigma$ be the cyclicity of the critical graph. As the critical graph is strongly connected, there is a unique CSR term (with $C$, $S$ and $R$ defined using $\crit_1=\crit(F)$).
Using Proposition~\ref{p:cycleCSR} and that $l(\cycle)$ is a multiple of $\sigma$ we rewrite~\eqref{e:CSR-interm0} as
\begin{equation*}
V\otimes\lambda^{\otimes t_1}\otimes  CS^{t_1(\rem\sigma)}R[F]=
V\otimes\lambda^{\otimes t_2}\otimes  CS^{t_2(\rem\sigma)}R[F], 
\end{equation*}
for some
$t_1\neq t_2$, $t_1,t_2\geq T(F)$.
Now recall that we have $F^{\otimes t}=\lambda^{\otimes t}\otimes CS^{t(\rem\sigma)}R[F]$ for all $t\geq T(F)$ by Proposition~\ref{p:cyclicity}, hence
$V\otimes F^{\otimes t_1}=V\otimes F^{\otimes t_2}$ for some $t_1,t_2\geq T(F)$ and $t_1\neq t_2$, violating the result of Lemma~\ref{l:disclog}. So the condition of this theorem implies any of the equivalent conditions of Theorem~\ref{t:disclog}, and the claim follows. 
\end{proof}

\if{
For the purpose of what follows next, we will also consider the following special case.

\begin{corollary}
\label{c:working} 
Suppose that $F=I\oplus H$, $F$ is irreducible with strongly connected $\crit(F)$ and $\lambda(F)>0$,
and that $V\neq -\infty$. Then for any given $A=V\otimes F^{\otimes t}$, Algorithm~\ref{a:disclog} finds this $t$. 
\end{corollary}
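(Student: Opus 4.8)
The plan is to observe that Corollary~\ref{c:working} is a specialization of Corollary~\ref{c:goodcase}, so that the whole argument reduces to checking that the hypotheses stated here imply the four hypotheses of that corollary. Concretely, I would match them one by one: the assumption $V\neq -\infty$ is precisely the requirement that $V$ have finite entries; $F$ is assumed irreducible with strongly connected $\crit(F)$, which is identical to two of the hypotheses there; and it then remains only to deduce the condition $\lambda(F)\neq 0$ from the present assumptions.

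For the last point I would exploit the structural assumption $F=I\oplus H$. Since tropical addition with $I$ places $0$ on the diagonal, every diagonal entry of $F$ satisfies $F_{ii}\geq 0$, so $\digr(F)$ carries a loop of nonnegative weight at each node and hence $\lambda(F)\geq 0$ by the maximum-cycle-mean formula~\eqref{e:mcm}. Thus the hypothesis $\lambda(F)>0$ is the natural nondegeneracy condition in this setting, and in particular it gives $\lambda(F)\neq 0$, which is exactly what Corollary~\ref{c:goodcase} requires. Equivalently, one could merely record that $\lambda(F)>0$ trivially implies $\lambda(F)\neq 0$, but the remark above explains why positivity, rather than mere nonvanishing, is the appropriate assumption once $F=I\oplus H$ is imposed.

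With all four hypotheses of Corollary~\ref{c:goodcase} verified, I would simply invoke it: the tropical discrete logarithm is well defined by Lemma~\ref{l:disclog}, and Algorithm~\ref{a:disclog} returns the unique $t$ with $A=V\otimes F^{\otimes t}$, splitting as usual into the brute-force range $t\leq (d-1)^2$ handled in Step~1 and the range $t\geq (d-1)^2+1$ handled in Step~2. I do not expect a genuine obstacle here, since the content is inherited wholesale from Corollary~\ref{c:goodcase}; the only thing to be careful about is the translation of notation, namely $V\neq -\infty$ versus finite entries and $\lambda(F)>0$ versus $\lambda(F)\neq 0$. Should a self-contained argument be preferred, the substantive work would again be the well-definedness step of Lemma~\ref{l:disclog}, where the tropical Perron--Frobenius uniqueness of the eigenvalue of an irreducible matrix, and of its powers, is what rules out the coincidences $V\otimes F^{\otimes t_1}=V\otimes F^{\otimes t_2}$; everything else would then follow from the CSR expansion and the equivalent conditions of Theorem~\ref{t:disclog}.
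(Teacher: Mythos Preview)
Your reduction to Corollary~\ref{c:goodcase} is clean, but it hinges on reading ``$V\neq -\infty$'' as ``$V$ has all finite entries''. In the paper this phrase is used in the weaker sense ``$V$ is not identically $-\infty$'' (compare the condition $V\neq\mathcal{E}$ used later for the attack, and the phrase ``any row of $V\otimes F^{\otimes t_1}$ that is not identically $-\infty$'' in the paper's own argument). Under that weaker reading, Corollary~\ref{c:goodcase} cannot be invoked directly, since its hypothesis on $V$ is strictly stronger; so your first matching step does not go through as stated.

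The paper's proof takes a genuinely different route that exploits the extra structure $F=I\oplus H$ rather than discarding it. From $F\geq I$ one gets the monotone chain $V\otimes F^{\otimes t_1}\leq V\otimes F^{\otimes (t_1+1)}\leq\cdots\leq V\otimes F^{\otimes t_2}$, so an equality $V\otimes F^{\otimes t_1}=V\otimes F^{\otimes t_2}$ for $t_1<t_2$ collapses to $V\otimes F^{\otimes t_1}=V\otimes F^{\otimes (t_1+1)}$, and any non-$(-\infty)$ row of $V\otimes F^{\otimes t_1}$ becomes a left eigenvector of $F$ with eigenvalue $0$, contradicting irreducibility and $\lambda(F)>0$. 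This establishes well-definedness of the logarithm under the weaker assumption on $V$; the passage from well-definedness to correctness of the algorithm then follows the same CSR mechanism as in the proof of Corollary~\ref{c:goodcase} (the part of that proof after Lemma~\ref{l:disclog} does not itself use finiteness of all entries of $V$). What each approach buys: yours is shorter when $V$ really is finite everywhere, but it makes no essential use of $F=I\oplus H$ beyond the remark $\lambda(F)\geq 0$; the paper's monotonicity argument is precisely where the hypothesis $F=I\oplus H$ earns its keep and is what lets one relax the requirement on $V$.
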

\begin{proof}
We will argue that equality $V\otimes F^{\otimes t_1}=V\otimes F^{\otimes t_2}$ is impossible in this case for $t_1\neq t_2$. On the contrary, suppose that it holds and, say, $t_1<t_2$. Since $F\geq I$, we have $$ V\otimes F^{\otimes t_1}\leq V\otimes F^{\otimes (t_1+1)}\leq \ldots V\otimes F^{\otimes t_2}.$$
As the left and the right ends of this chain of inequalities coincide, we obtain $V\otimes F^{\otimes t_1}=V\otimes F^{\otimes (t_1+1)}$. This implies that $F$ has a left eigenvector with eigenvalue $0$: any row of $V\otimes F^{\otimes t_1}$ that is not identically $-\infty$. But the irreducibility of $F$ implies that the eigenvalue is unique and it is not equal to $0$, a contradiction. 
\end{proof}
}\fi

\section{Semidirect products and powers}
\label{s:semidirect}

Grigoriev and Shpilrain~\cite{grigoriev2018tropical}  consider the following {\em semidirect product} of the pairs of matrices over tropical semiring
\begin{equation}
\label{e:semidirect1}
    (M,G)(A,H)=((M\circ H)\oplus A, G\circ H)
\end{equation}

We will consider one of the protocols in~\cite{grigoriev2018tropical}, where  $\circ$ is defined as the {\em adjoint product:} 
\begin{equation}
\label{e:adjoint}
A\circ B= A\oplus B\oplus A\otimes B,
\end{equation}
defined for any square matrices $A$ and $B$ of the same size.
It has the following properties:
\begin{itemize}
    \item $(A\circ B)\circ C=A\circ (B\circ C)$ (associativity),
    \item $A\circ (B\oplus C)=A\circ B\oplus A\circ C$ and $(B\oplus C)\circ A=B\circ C\oplus B\circ A$
    (distributivity).
\end{itemize}

Adjoint product~\eqref{e:semidirect2} can be used to define {\em adjoint powers} inductively: $A^{\circ(k+1)}=A^{\circ k}\circ A$ for all $k$. Moreover, the associativity implies that
for any nonzero numbers
$m_1,\ldots,m_s\in\Nat$ such that $m_1+\ldots+m_s=k$ we have 
\begin{equation}
\label{e:adjointpowerprop}
A^{\circ k}=A^{\circ m_1}\circ A^{\circ m_2}\circ \ldots  \circ A^{\circ m_s}.     
\end{equation}
Thus the adjoint powers $A^{\circ n}=\underbrace{A\circ\ldots\circ A}_n$ are well-defined and can be quickly computed using~\eqref{e:adjointpowerprop}. 
Alternatively, the following identity for them can be offered:
\begin{equation}
\label{e:Acircn}
A^{\circ n}=A\oplus A^{\otimes 2}\oplus \ldots \oplus A^{\otimes n}.    
\end{equation}
Indeed, $A^{\circ 2}=A\oplus A^{\otimes 2}$ is obvious, and for general $n$ we can use a simple 
induction:
\begin{equation*}
\begin{split}
A^{\circ n}&=A^{\circ(n-1)}\circ A=A\oplus A^{\circ(n-1)}\oplus (A^{\circ (n-1)}\otimes A)\\
&= A\oplus (A\oplus A^{\otimes 2}\oplus\ldots\oplus A^{\otimes(n-1)})\oplus (A^{\otimes 2}\oplus\ldots \oplus A^{\otimes n})\\
&= A\oplus A^{\otimes 2}\oplus\ldots\oplus A^{\otimes n}.
\end{split}
\end{equation*}

Using~\eqref{e:Acircn} we also observe the following:
\begin{proposition}
Let $A\in\Rmax^{d\times d}$ have $\lambda(A)\leq 0$ and $n\geq d$. Then $A^{\circ n}=A^+$. 
\end{proposition}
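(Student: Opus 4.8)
The plan is to read off both matrices as tropical sums of powers and then compare them entrywise using the walk interpretation of $A^{\otimes k}$. By the identity \eqref{e:Acircn} we have $A^{\circ n}=A\oplus A^{\otimes 2}\oplus\ldots\oplus A^{\otimes n}$, while by \eqref{e:metricmatrix} the metric matrix is $A^+=A\oplus A^{\otimes 2}\oplus\ldots\oplus A^{\otimes d}$. Since $n\geq d$, the inequality $A^{\circ n}\geq A^+$ is immediate, because $A^{\circ n}$ is a tropical sum containing every term of $A^+$. It therefore remains to prove the reverse inequality, and for this it suffices to show that $A^{\otimes k}\leq A^+$ for every $k$ with $d<k\leq n$; then all the extra terms present in $A^{\circ n}$ are absorbed by $A^+$ and the two matrices coincide.

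First I would fix an entry $(i,j)$ and assume $A^{\otimes k}_{ij}\neq -\infty$, so that there is a walk $W$ on $\digr(A)$ from $i$ to $j$ of length $k$ whose weight equals $A^{\otimes k}_{ij}$. Since $k>d$, the walk visits more than $d$ nodes counted with multiplicity, so some node repeats and $W$ contains a simple cycle $\cycle$ of length $\ell\leq d$. Because $\lambda(A)\leq 0$, formula \eqref{e:mcm} forces the mean weight of every cycle to be nonpositive, hence the total weight of $\cycle$ is $\leq 0$. Deleting $\cycle$ from $W$ yields a walk from $i$ to $j$ of strictly smaller length and of weight no smaller than that of $W$.

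The key step is to iterate this deletion while controlling the length from below, so as to land on a term actually present in $A^+$. As long as the current length $L$ exceeds $d$, there is a repeated node and hence a simple cycle of length $\ell\leq d$ available for removal; after removal the length becomes $L-\ell\geq L-d\geq 1$. Thus the process never produces the empty (length-$0$) walk and terminates with a walk from $i$ to $j$ of some length $m\in\{1,\ldots,d\}$ and weight at least $A^{\otimes k}_{ij}$. Consequently $A^+_{ij}\geq A^{\otimes m}_{ij}\geq A^{\otimes k}_{ij}$, which is the desired entrywise bound. This is, of course, the standard fact that $A^+=\bigoplus_{k\geq 1}A^{\otimes k}$ when $\lambda(A)\leq 0$, cf.\ \cite{butkovic}, but the walk-reduction argument makes it self-contained.

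The only delicate point, and the one I would watch most carefully, is precisely this length control in the closed-walk case $i=j$: naive cycle removal could collapse a closed walk all the way to length $0$, landing on the identity term $A^{\otimes 0}$, which does \emph{not} appear in $A^+$. Stopping the removal as soon as the length drops to $d$ or below, together with the bound $\ell\leq d$ on the length of a simple cycle, rules this out and keeps every intermediate walk nonempty. With $A^{\otimes k}\leq A^+$ established for all $k\geq 1$, combining it with the trivial inequality $A^{\circ n}\geq A^+$ gives $A^{\circ n}=A^+$.
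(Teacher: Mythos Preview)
Your proof is correct. The paper does not give an explicit proof of this proposition: it is stated as an immediate observation following~\eqref{e:Acircn}, relying on the earlier remark (just after~\eqref{e:kleenestar}) that each entry $A^+_{ij}$ equals the maximum weight of a walk from $i$ to $j$ of \emph{arbitrary} positive length, which already encodes the fact that $A^{\otimes k}\leq A^+$ for all $k\geq 1$ when $\lambda(A)\leq 0$. Your walk-reduction argument is precisely the standard justification of that fact, and your careful handling of the length lower bound (stopping once $L\leq d$, using $\ell\leq d$ to ensure the result has length $\geq 1$) is exactly what is needed to avoid collapsing to the identity term; so your write-up is a self-contained expansion of what the paper takes for granted.
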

Here $A^+$ is the {\em metric matrix} of $A$ defined in~\eqref{e:metricmatrix}.

With $\circ$ being the adjoint multiplication, the semidirect product of $(M,G)$ and $(A,H)$ given by~\eqref{e:semidirect1} becomes
\begin{equation}
\label{e:semidirect2}
(M,G)(A,H)=(M\oplus A\oplus H\oplus M\otimes H, G\otimes H\oplus G\oplus H). 
\end{equation}

The semidirect product is associative: we have
\begin{equation}
\label{e:assoc}
[(M,G)\cdot(A,H)]\cdot (B,J)=(M,G)\cdot [(A,H)\cdot (B,J)]
\end{equation}
For the proof of this property, see Appendix.

Semidirect product~\eqref{e:semidirect2} can be used to define semidirect powers of matrix pairs inductively: $(M,H)^{k+1}=(M,H)^k\cdot (M,H)$
for all $k$. Moreover, associativity~\eqref{e:assoc} implies that
for any nonzero numbers
$m_1,\ldots,m_s\in\Nat$ such that $m_1+\ldots+m_s=k$ we have \begin{equation}
\label{e:powerprop}
(M,H)^k=(M,H)^{m_1}(M,H)^{m_2}\dots (M,H)^{m_s}.     
\end{equation}
This property assures that the semidirect powers $(M,H)^{k}=\underbrace{(M,H)\cdot\ldots\cdot (M,H)}_k$ are well-defined. We now express the semidirect powers in terms of the tropical matrix powers.

\begin{proposition}
\label{p:MHpowers}
Let $M,H\in\Rmax^{d\times d}$. Then
\begin{equation*}
(M,H)^k=
((M\otimes\bigoplus_{i=0}^{k-1} H^{\otimes i}) \oplus (H\otimes\bigoplus_{i=0}^{k-2} H^{\otimes i}), H^{\circ k}) \end{equation*}
for all $k\geq 2$.
\end{proposition}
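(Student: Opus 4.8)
The plan is to proceed by induction on $k$, writing $(M,H)^k=(M_k,G_k)$ and establishing simultaneously that $G_k=H^{\circ k}$ and that $M_k$ equals the claimed expression. The base case $k=2$ I would check directly: from the semidirect product~\eqref{e:semidirect1} we get $(M,H)^2=\bigl((M\circ H)\oplus M,\ H\circ H\bigr)$, and expanding $M\circ H$ via the adjoint product~\eqref{e:adjoint} yields first component $M\oplus H\oplus M\otimes H$, which coincides with $M\otimes(I\oplus H)\oplus H\otimes I$, the claimed value at $k=2$; the second component is $H\circ H=H^{\circ 2}$.

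For the inductive step (taking $k\geq 3$) I would use $(M,H)^k=(M,H)^{k-1}\cdot(M,H)$ together with~\eqref{e:semidirect1}, which produces the two recursions $G_k=G_{k-1}\circ H$ and $M_k=(M_{k-1}\circ H)\oplus M$. The second-component claim $G_k=H^{\circ k}$ is then immediate from the definition of adjoint powers. All the real work lies in the first component. Expanding the adjoint product gives $M_k=M\oplus H\oplus M_{k-1}\oplus(M_{k-1}\otimes H)$, into which I substitute the inductive hypothesis $M_{k-1}=(M\otimes\bigoplus_{i=0}^{k-2}H^{\otimes i})\oplus(H\otimes\bigoplus_{i=0}^{k-3}H^{\otimes i})$.

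The heart of the computation is to collect the $M\otimes(\cdot)$ terms and the $H\otimes(\cdot)$ terms separately. Post-multiplying the two sums by $H$ shifts their index ranges up by one, e.g. $(\bigoplus_{i=0}^{k-2}H^{\otimes i})\otimes H=\bigoplus_{j=1}^{k-1}H^{\otimes j}$; then, invoking idempotency of $\oplus$, the overlapping index sets $\{0\}\cup\{0,\ldots,k-2\}\cup\{1,\ldots,k-1\}$ merge to $\{0,\ldots,k-1\}$ for the $M$-block, and $\{0\}\cup\{0,\ldots,k-3\}\cup\{1,\ldots,k-2\}$ merge to $\{0,\ldots,k-2\}$ for the $H$-block, reproducing exactly $M\otimes\bigoplus_{i=0}^{k-1}H^{\otimes i}\oplus H\otimes\bigoplus_{i=0}^{k-2}H^{\otimes i}$ as required.

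I expect the only genuine obstacle to be the index bookkeeping, in particular the asymmetry whereby the $M$-sum runs up to $k-1$ while the $H$-sum stops at $k-2$. To make the ranges close up one must treat the standalone $M$ and $H$ as the $i=0$ summands $M\otimes H^{\otimes 0}$ and $H\otimes H^{\otimes 0}$, and one must handle the empty-sum conventions near the base case (where $\bigoplus_{i=0}^{k-3}H^{\otimes i}$ degenerates). Beyond this careful indexing, everything reduces to routine semiring manipulation using distributivity and idempotency.
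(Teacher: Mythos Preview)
Your proposal is correct and follows essentially the same approach as the paper: both argue by induction on $k$, verify the base case $k=2$ directly from the semidirect product formula, and in the inductive step expand $(M,H)^{k-1}\cdot(M,H)$ using~\eqref{e:semidirect2}, then collect the $M\otimes(\cdot)$ and $H\otimes(\cdot)$ summands via idempotency to merge the shifted index ranges. Your write-up is in fact slightly more careful about the index bookkeeping (noting the empty-sum edge case at $k=3$) than the paper's own proof, but the argument is the same.
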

\begin{proof}
We first consider $k=2$ to check the base of induction. 
We obtain:
\begin{equation*}
    (M,H)(M,H)=(M\oplus H\oplus M\oplus M\otimes H,\ H^{\circ 2})=(M\otimes (I\oplus H)\oplus H,\ H^{\circ 2}). 
\end{equation*}
We now assume that the statement holds for $k=t$ and 
prove it for $k=t+1$. Indeed:
\begin{equation*}
\begin{split}
& (M,H)^{t+1}=(M,H)^t\cdot (M,H)= ((M\otimes\bigoplus_{i=0}^{t-1} H^{\otimes i}) \oplus (H\otimes\bigoplus_{i=0}^{t-2} H^{\otimes i}), H^{\circ t})
  \cdot (M,H)\\
  &=((M\otimes\bigoplus_{i=0}^{t-1} H^{\otimes i}) \oplus (H\otimes\bigoplus_{i=0}^{t-2} H^{\otimes i})\oplus 
  M\oplus H\oplus (M\otimes\bigoplus_{i=1}^{t} H^{\otimes i}) \oplus (H\otimes\bigoplus_{i=1}^{t-1} H^{\otimes i}), H^{\circ (t+1)})\\
  &=((M\otimes\bigoplus_{i=0}^{t} H^{\otimes i}) \oplus 
  (H\otimes\bigoplus_{i=0}^{t-1} H^{\otimes i}), H^{\circ(t+1)}).
 \end{split}    
\end{equation*}
The induction is complete.
\end{proof}

Note that we can also use that $\bigoplus_{i=0}^k H^{\otimes i}= (I\oplus H)^{\otimes k}$ 
for any $k$, and then the result of the previous proposition can be reformulated as follows:
\begin{equation}
\label{e:MHpower-regroup}
\begin{split}
(M,H)^k&=(M\otimes (I\oplus H)^{\otimes (k-1)}\oplus H\otimes (I\oplus H)^{\otimes (k-2)}, H^{\circ k})\\
       &=((M\otimes (I\oplus H)\oplus H)\otimes (I\oplus H)^{\otimes (k-2)}, H^{\circ k}).
\end{split}
\end{equation}

\section{The protocol and its cryptanalysis}
\label{s:protocol}

In this section we will use the following order relations between matrices $A,\, B\in\Rmax^{m\times n}$ of same dimensions. We write: 
\begin{itemize}
    \item $A\leq B$ (resp. $A\geq B$), if $a_{ij}\leq b_{ij}$ (resp. $a_{ij}\geq b_{ij}$) for all $i\in\{1,\ldots,m\}$ and $j\in\{1,\ldots n\}$;
    \item $A<B$, if $A\leq B$ and $A\neq B$;
    \item $A>B$, if $A\geq B$ and $A\neq B$.
    \end{itemize}

\subsection{The protocol under question}
Based on the property~\eqref{e:powerprop}, Grigoriev and Shpilrain~\cite{grigoriev2018tropical} suggested the following protocol using tropical semidirect powers: 

\begin{protocol}[Grigoriev and Shpilrain~\cite{grigoriev2018tropical}]~\\
\label{prot:main}
\begin{itemize}
    \item[1.] Alice and Bob agree on public matrices $M,H\in\Zmax^{d\times d}$ (that is, $d\times d$ matrices $M$ and $H$ whose entries are integer numbers or $-\infty$);
    \item[2.] Alice selects a private positive integer $m$ and Bob selects 
    a private positive integer $n$;
    \item[3.] Alice computes $(M,H)^m=(A,H^{\circ m})$ and sends $A$ to Bob;
    \item[4.] Bob computes $(M,H)^n=(B,H^{\circ n})$ and sends $B$ to Alice;
    \item[5.] Alice computes $K_a=A\oplus B\oplus H^{\circ m}\oplus (B\otimes H^{\circ m})$;
    \item[6.] Bob computes $K_b=A\oplus B\oplus H^{\circ n}\oplus (A\otimes H^{\circ n})$.
\end{itemize}
\end{protocol}

Property~\eqref{e:powerprop} implies that $K_a=K_b$, since both of them are the first component of $(M,H)^{m+n}$.

For the protocol recalled above, 
we immediately obtain using~\eqref{e:MHpower-regroup}
\begin{equation}
\label{e:AB}
\begin{split}
A&=\left(M\otimes\bigoplus_{i=0}^{m-1} H^{\otimes i}\right) \oplus \left(H\otimes \bigoplus_{i=0}^{m-2} H^{\otimes i}\right)=(M\otimes (I\oplus H)\oplus H)\otimes (I\oplus H)^{\otimes (m-2)},\\
B&=\left(M\otimes \bigoplus_{i=0}^{n-1} H^{\otimes i}\right) \oplus \left(H\otimes\bigoplus_{i=0}^{n-2} H^{\otimes i}\right)=(M\otimes (I\oplus H)\oplus H)\otimes (I\oplus H)^{\otimes (n-2)},
\end{split}
\end{equation}
for the messages exchanged between Alice and Bob ($m\geq 2$ and $n\geq 2$), using Proposition~\ref{p:MHpowers}.

We have the following immediate corollary of these expressions.

\begin{corollary}
\label{c:A<B}
We have the following implications:
\begin{itemize}
    \item[{\rm (i)}] $m>n\Rightarrow A\geq B$, $n>m\Rightarrow B\geq A$;
    \item[{\rm (ii)}] $A>B\Rightarrow m>n,$ $B>A\Rightarrow n>m$.
\end{itemize}
\end{corollary}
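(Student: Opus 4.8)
The plan is to read both $A$ and $B$ directly off the factored form in~\eqref{e:AB}. Writing $P:=M\otimes(I\oplus H)\oplus H$ and $Q:=I\oplus H$ for the common left factor and the repeated right factor, equation~\eqref{e:AB} says (in the regime $m,n\geq 2$ in which it was derived) that
\[
A=P\otimes Q^{\otimes(m-2)},\qquad B=P\otimes Q^{\otimes(n-2)}.
\]
Thus $A$ and $B$ differ only in how many times the same matrix $Q$ is applied on the right. The whole argument then reduces to one monotonicity fact: right-multiplication by $Q=I\oplus H$ can only increase entries, because $Q\geq I$ in the entrywise tropical order.

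First I would record that $Q\geq I$, which is immediate since $Q_{ii}=\max(0,H_{ii})\geq 0=I_{ii}$ while off-diagonal entries only add to the $-\infty$ of $I$. Combining $Q\geq I$ with the monotonicity of tropical matrix multiplication (if $U\geq V$ then $U\otimes W\geq V\otimes W$), a one-line induction gives $Q^{\otimes k}\geq I$ for every $k\geq 0$, with base case $Q^{\otimes 0}=I$. To prove (i), assume $m>n$. Then $Q^{\otimes(m-2)}=Q^{\otimes(n-2)}\otimes Q^{\otimes(m-n)}$ by additivity of exponents for tropical matrix powers, and $m-n>0$, so
\[
A=P\otimes Q^{\otimes(n-2)}\otimes Q^{\otimes(m-n)}=B\otimes Q^{\otimes(m-n)}\geq B\otimes I=B,
\]
where the inequality uses $Q^{\otimes(m-n)}\geq I$ together with monotonicity. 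The implication $n>m\Rightarrow B\geq A$ is obtained identically by swapping the roles of $m$ and $n$.

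Finally, part (ii) is a purely logical consequence of (i). Suppose $A>B$, that is $A\geq B$ and $A\neq B$. If we had $m=n$ then $A=B$, and if we had $m<n$ then (i) would give $B\geq A$, which together with $A\geq B$ forces $A=B$; in either case we contradict $A\neq B$, so $m>n$. The remaining implication $B>A\Rightarrow n>m$ follows symmetrically. I do not expect a genuine obstacle in this corollary; the only points deserving care are verifying $Q\geq I$ and justifying that multiplying on the right by a matrix $\geq I$ is non-decreasing, after which both parts drop out, provided one keeps in mind that the factored expressions~\eqref{e:AB} (and hence the statement) are used in the regime $m,n\geq 2$.
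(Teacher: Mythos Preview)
Your proof is correct and matches the paper's approach: the paper simply declares the corollary an ``immediate corollary'' of the factored expressions~\eqref{e:AB}, and you have spelled out exactly the monotonicity argument (namely $I\oplus H\geq I$, hence right-multiplication by powers of $I\oplus H$ is non-decreasing) that makes it immediate. Your observation that the argument lives in the regime $m,n\geq 2$ is appropriate, as that is the regime in which~\eqref{e:AB} was derived.
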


In the next sections we describe the attack on the Grigoriev-Shpilrain protocol, which depends on the sign of $\lambda(H)$.

Let us denote 
\begin{equation*}
V=M\otimes (I\oplus H)\oplus H.
\end{equation*}
The messages sent by Alice and Bob can be expressed as
\begin{equation*}
A=V\otimes (I\oplus H)^{\otimes (m-2)},\quad B=V\otimes (I\oplus H)^{\otimes (n-2)},    
\end{equation*}  
as it follows from~\eqref{e:AB}. Hence, under the assumptions of Theorem~\ref{t:disclog} or Corollary~\ref{c:goodcase}, we can apply Algorithm~\ref{a:disclog} to $A,B,V$ and $F=I\oplus H$ to find $m-2$ and $n-2$ (unless $m=1$ or $n=1$). Notice, however, that this algorithm cannot be applied when $\lambda(H)\leq 0$, and this motivates a separate treatment of this case.

\subsection{Case $\lambda(H)\leq 0$}
\label{ss:easycase}
Recall that if $\lambda(H)\leq 0$ for $H\in\Rmax^{d\times d},$ then we have
\begin{equation}
H^*=I\oplus H\oplus \ldots H^{\otimes (d-1)}    
\end{equation}
We then immediately obtain the following corollary of~\eqref{e:AB}.
\begin{corollary}
Let $M,H\in\Rmax^{d\times d}$ and $\lambda(H)\leq 0$. If $m\geq d+1$ then $A=(M\oplus H)\otimes H^*,$ and if $n\geq d+1$ then 
$B=(M\oplus H)\otimes H^*$.
\end{corollary}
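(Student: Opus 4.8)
The plan is to start from the explicit formula for $A$ in terms of tropical matrix powers given in~\eqref{e:AB}, namely
\[
A=\left(M\otimes\bigoplus_{i=0}^{m-1} H^{\otimes i}\right) \oplus \left(H\otimes \bigoplus_{i=0}^{m-2} H^{\otimes i}\right),
\]
and to show that, under the assumption $\lambda(H)\leq 0$, both truncated sums of powers of $H$ collapse to the Kleene star $H^*$ as soon as the number of terms is large enough.

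The key intermediate fact I would establish is that for $\lambda(H)\leq 0$ and any $k\geq d-1$ we have $\bigoplus_{i=0}^{k}H^{\otimes i}=H^*$. One way to see this is combinatorial: the entry $(H^{\otimes i})_{pq}$ is the maximal weight of a walk of length $i$ from $p$ to $q$ in $\digr(H)$, and since $\lambda(H)\leq 0$ every cycle has nonpositive weight, so any walk of length $\geq d$ can be shortened by deleting a cycle without decreasing its weight; hence the maximum over all lengths is already attained by paths of length at most $d-1$. Equivalently, and more in keeping with the preceding text, I would invoke the Proposition stating that $H^{\circ k}=H^+$ for $k\geq d$ whenever $\lambda(H)\leq 0$, together with the identities $\bigoplus_{i=0}^{k}H^{\otimes i}=I\oplus H^{\circ k}$ and $H^*=I\oplus H^+$, to conclude $\bigoplus_{i=0}^{k}H^{\otimes i}=H^*$ for $k\geq d$; the remaining case $k=d-1$ holds by the very definition of $H^*$ in~\eqref{e:kleenestar}.

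Applying this with $k=m-1$ and $k=m-2$ then finishes the argument. The binding constraint comes from the second sum: $\bigoplus_{i=0}^{m-2}H^{\otimes i}=H^*$ requires $m-2\geq d-1$, i.e.\ exactly $m\geq d+1$, while the first sum $\bigoplus_{i=0}^{m-1}H^{\otimes i}=H^*$ then holds a fortiori since $m-1\geq d$. Substituting both into the formula above gives $A=(M\otimes H^*)\oplus(H\otimes H^*)$, and a final appeal to distributivity of $\otimes$ over $\oplus$ yields $A=(M\oplus H)\otimes H^*$. The statement for $B$ is identical with $m$ replaced by $n$.

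There is no serious obstacle here; the only point demanding care is the off-by-one bookkeeping in the exponent ranges, to confirm that the hypothesis $m\geq d+1$ is precisely what is needed to stabilise the shorter sum $\bigoplus_{i=0}^{m-2}H^{\otimes i}$, which is the one that controls the threshold.
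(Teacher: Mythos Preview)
Your proposal is correct and follows precisely the approach the paper intends: the paper states this corollary as an immediate consequence of~\eqref{e:AB} without further proof, and your argument simply fills in the routine details---collapsing both truncated power sums to $H^*$ via the stabilisation of $\bigoplus_{i=0}^k H^{\otimes i}$ for $k\geq d-1$ when $\lambda(H)\leq 0$, and then applying distributivity. Your off-by-one analysis correctly identifies $m\geq d+1$ as the exact threshold forced by the shorter sum $\bigoplus_{i=0}^{m-2}H^{\otimes i}$.
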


Using this corollary and~\eqref{e:Acircn}, if $m\geq d+1$ or if $A=(M\oplus H)\otimes H^*$ we also obtain
\begin{equation*}
\begin{split}
A\otimes H^{\circ n}&=(M\oplus H)\otimes (H^*\otimes H^{\circ n})\\
&=(M\oplus H)\otimes \left(H^*\otimes \bigoplus_{i=1}^n H^{\otimes i} \right)\leq (M\oplus H)\otimes H^*=A,\\
H^{\circ n}&=\bigoplus_{i=1}^n H^{\otimes i} \leq A, 
\end{split}
\end{equation*}
and also $B\leq A$, using~\eqref{e:AB}. If $n\geq d+1$ or if $B=(M\oplus H)\otimes H^*$ then we have
\begin{equation*}
\begin{split}
B\otimes H^{\circ m}&=(M\oplus H)\otimes (H^*\otimes H^{\circ m})\\
&= (M\oplus H)\otimes \left(H^*\otimes \bigoplus_{i=1}^m H^{\otimes i} \right)\leq  (M\oplus H)\otimes H^*=B,\\
H^{\circ m}&=\bigoplus_{i=1}^m H^{\otimes i}\leq B
\end{split}
\end{equation*}
and $A\leq B$.
Therefore, we have
\begin{equation*}
\begin{split}
K_a&= B\oplus A\oplus H^{\circ m}\oplus B\otimes H^{\circ m}=A\oplus B=B, \quad \text{if $n\geq d+1$}\\
K_b&= A\oplus B\oplus H^{\circ n}\oplus A\otimes H^{\circ n}=A\oplus B=A, \quad \text{if $m\geq d+1$}.
\end{split}    
\end{equation*}

Thus we arrive at the following result.
\begin{proposition}
Let $M,H\in\Rmax^{d\times d}$ and $\lambda(H)\leq 0$ and let $m\geq d+1$, $n\geq d+1$, $A=(M\oplus H)\otimes H^*$ or $B=(M\oplus H)\otimes H^*$. Then
\begin{equation*}
K_a=K_b=A\oplus B=(M\oplus H)\otimes H^*.    
\end{equation*}
\end{proposition}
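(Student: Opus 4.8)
The plan is to reduce the statement to the two symmetric situations $A=(M\oplus H)\otimes H^*$ and $B=(M\oplus H)\otimes H^*$, which are exactly what the computations displayed just before the statement deliver. Write $P=(M\oplus H)\otimes H^*$ for the common target value. By the Corollary immediately preceding the statement, the hypothesis $m\geq d+1$ already forces $A=P$, and $n\geq d+1$ forces $B=P$; together with the two remaining listed conditions this means that the disjunctive hypothesis amounts to ``$A=P$ or $B=P$''. I would therefore treat the case $A=P$ in full and obtain the case $B=P$ by the obvious symmetry, interchanging the roles of $(A,m)$ and $(B,n)$.

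The key algebraic input, valid because $\lambda(H)\leq 0$, is the absorption behaviour of the Kleene star: $(I\oplus H)\otimes H^*=H^*$, $H\otimes H^*\leq H^*$ and $H^*\otimes H^{\otimes i}\leq H^*$ for every $i\geq 1$, together with $(I\oplus H)^{\otimes k}\leq H^*$ for all $k$. From these, premultiplying by $V=M\otimes(I\oplus H)\oplus H$ gives $V\otimes H^*=P$, so that from~\eqref{e:AB} we get $A=V\otimes (I\oplus H)^{\otimes(m-2)}\leq V\otimes H^*=P$ and likewise $B\leq P$, \emph{unconditionally}. Thus $P$ is an upper bound for both messages, and all that the hypothesis adds is that one of them attains it.

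Assume $A=P$. Then $B\leq P=A$, so $A\oplus B=A=P$, which is one third of the conclusion. For $K_b$ I would show that each of its remaining summands is dominated by $A$: using $H^{\circ n}=\bigoplus_{i=1}^n H^{\otimes i}\leq H\otimes H^*\leq P=A$ and $A\otimes H^{\circ n}=P\otimes H^{\circ n}=(M\oplus H)\otimes\big(H^*\otimes\bigoplus_{i=1}^n H^{\otimes i}\big)\leq (M\oplus H)\otimes H^*=A$, whence $K_b=A\oplus B\oplus H^{\circ n}\oplus A\otimes H^{\circ n}=A=P$. These are precisely the inequalities already written out before the statement. Finally, since the protocol guarantees $K_a=K_b$ via property~\eqref{e:powerprop}, I obtain $K_a=K_b=A\oplus B=P$, completing this case; the case $B=P$ is identical with $m$ and $n$ swapped.

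The computations are all routine max-plus manipulations, so there is no serious obstacle; the only points requiring care are bookkeeping the disjunctive hypothesis, i.e.\ checking that each of the four listed conditions collapses to ``$A=P$ or $B=P$'', and invoking $K_a=K_b$ to transfer the conclusion obtained for one key to the other rather than re-deriving it from scratch.
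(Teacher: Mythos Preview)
Your proposal is correct and follows essentially the same approach as the paper: reduce to the case $A=P$ (resp.\ $B=P$), then show the remaining summands of the key are dominated by $P$ using the absorption properties of $H^*$ when $\lambda(H)\leq 0$. The only cosmetic difference is that the paper computes $K_b$ in the case $A=P$ and $K_a$ in the case $B=P$ separately, whereas you compute one key directly and then invoke the protocol identity $K_a=K_b$; both are equally valid.
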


Thus in this case the key can be computed simply as $A\oplus B$.

\subsection{Computing the key with known $m$ and $n$}

If we have $m$ and $n$ then the key can be obviously computed as 
 \begin{equation}
\label{e:keycompute}
    K_a=K_b=A\oplus B\oplus H^{\circ m}\oplus (B\otimes H^{\circ m})=
    A\oplus B\oplus H^{\circ n}\oplus (A\otimes H^{\circ n}),
\end{equation}
where $H^{\circ m}$ and $H^{\circ n}$ can be computed as adjoint powers, using~\eqref{e:adjointpowerprop} or \eqref{e:Acircn}.

Let us also consider how to simplify expression~\eqref{e:keycompute}. Assume first that $m>n$. Then $A\geq B$ and $A\geq H^{\circ n}$, since any power $H^{\otimes i}$ 
for $1\leq i\leq n$ appears as one of the terms in 
$$
A=(M\otimes (I\oplus H)\oplus H)(I\oplus H\oplus\ldots \oplus H^{\otimes (m-2)}),
$$
when we multiply it out. Then the key simplifies to
\begin{equation}
\label{e:Kacompute}
K_a=K_b=A\otimes(I\oplus H^{\circ n})=A\otimes (I\oplus H\oplus\ldots \oplus H^{\otimes n})=A\otimes (I\oplus H)^{\otimes n}.
\end{equation}

In the case $n>m$ we similarly obtain 
\begin{equation}
\label{e:Kbcompute}
K_a=K_b=B\otimes(I\oplus H\oplus\ldots \oplus H^{\otimes m})=B\otimes (I\oplus H)^{\otimes m}.
\end{equation}

In the case $m=n$ we have $B=A$ and therefore 
\begin{equation}
\label{e:Km=n}
K_a=K_b= A\otimes (I\oplus H)^{\otimes n}\oplus H\otimes (I\oplus H)^{\otimes (n-1)}.
\end{equation}



\subsection{Attacking the protocol}

Let us now give a more formal description of the attack on Protocol~\ref{prot:main}, in the form of an algorithm.

\begin{algorithm}[Attacking Protocol~\ref{prot:main}]~\\
\label{a:attack}
{\rm 
{\bf Input:} public matrices $M,H\in\Zmax^{d\times d}$ and messages $A,B\in\Zmax^{d\times d}$ of Alice and Bob.\\
{\bf Output:} common key $K_a=K_b$.
\begin{itemize}
    \item[0.] Compute $\lambda(H),$ $F=I\oplus H$ and 
    $V=(M\otimes (I\oplus H)\oplus H)$.
    \item[1.] If $\lambda(H)\leq 0$ then check if 
    $A=(M\oplus H)\otimes H^*$ or $B=(M\oplus H)\otimes H^*$. If any of these two conditions is true then return $K=(M\oplus H)\otimes H^*$.\\
    If none of these conditions are true, check if $A=M$ or $B=M$ or find $l_1,l_2=0,\ldots d-2$ such that $A=V\otimes F^{\otimes l_1}$ and $B=V\otimes F^{l_2}$. Then set $m=l_1+2$ or $m=1$ if $A=M$, and $n=l_2+2$ or $n=1$ if $B=M$,
    and go to 3.
    \item[2.] If $\lambda(H)>0$ then check $A=M$ or $B=M$ or find $l_1$ and $l_2$ satisfying $A=V\otimes F^{\otimes l_1}$ and $B=V\otimes F^{\otimes l_2}$ using Algorithm~\ref{a:disclog}.
    Then set $m=l_1+2$ or $m=1$ if $A=M$, and $n=l_2+2$ or $n=1$ if $B=M$, and go to 3.
    \item[3.] Compute the key using~\eqref{e:Kacompute},~\eqref{e:Kbcompute} or~\eqref{e:Km=n}.  
\end{itemize}
}
\end{algorithm}

The increasing property of $F=I\oplus H$ means that the sequence of matrices 
$\{M,\; V,\; V\otimes F,\; V\otimes F^2\ldots \}$ is non-decreasing, and it either stabilises so that 
$V\otimes F^{\otimes t}= (M\oplus H)\otimes H^*$ for $t\geq T$
for some $T\leq d-1$, or it grows in such a way that 
$$M<V< V\otimes F^{\otimes t_1}<V\otimes F^{\otimes t_2}<\ldots$$
In particular, we have $V\otimes F^{\otimes t_1}\neq V\otimes F^{\otimes t_2}$ for $t_1\neq t_2$,  
unless both are equal to $(M\oplus H)\otimes H^*$. These observations, together with the validity of Algorithm~\ref{a:disclog}, imply the following claim, where by $\mathcal{E}$ we define a matrix of arbitrary dimensions with all entries equal to $-\infty$.

\begin{proposition}
 Suppose that $F$ is irreducible with strongly connected $\crit(F)$, and that $V=(M\otimes (I\oplus H)\oplus H)\neq \mathcal{E}$. Then 
the attacker can compute the key using Algorithm~\ref{a:attack}.
\end{proposition}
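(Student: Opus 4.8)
The plan is to show that Algorithm~\ref{a:attack} always terminates with the correct common key $K_a=K_b$, treating the two regimes $\lambda(H)\le 0$ and $\lambda(H)>0$ separately, exactly as the algorithm branches. Throughout I will exploit that $F=I\oplus H\ge I$, so the sequence $\{V\otimes F^{\otimes t}\}_{t\ge 0}$ is non-decreasing, that $M\le V$ (since $V=M\otimes(I\oplus H)\oplus H\ge M$), and that $\lambda(F)\ge\lambda(I)=0$ with $\lambda(F)\ge\lambda(H)$ because $F\ge H$ entrywise. I will use the expressions~\eqref{e:AB}, which give $A=V\otimes F^{\otimes(m-2)}$ and $B=V\otimes F^{\otimes(n-2)}$ for $m,n\ge 2$, while $A=M$ (resp. $B=M$) encodes $m=1$ (resp. $n=1$) in the non-degenerate situation $M<V$. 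The reduction target is to recover $m$ and $n$ (or detect $m=1$, $n=1$): once they are known, Step~3 returns the true key, since each of~\eqref{e:Kacompute},~\eqref{e:Kbcompute},~\eqref{e:Km=n} simplifies to $V\otimes F^{\otimes(m+n-2)}$, the first component of $(M,H)^{m+n}$ by~\eqref{e:MHpower-regroup}.

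First I would treat $\lambda(H)\le 0$, so $\lambda(F)=0$ and $(I\oplus H)^{\otimes t}=H^*$ for $t\ge d-1$; hence the non-decreasing sequence $V\otimes F^{\otimes t}$ is strictly increasing until it stabilises at $V\otimes H^*=(M\oplus H)\otimes H^*$ at some $T\le d-1$. Strictness before $T$ holds because a single equality $V\otimes F^{\otimes t_0}=V\otimes F^{\otimes(t_0+1)}$ forces constancy for all larger $t$ after one further multiplication by $F$. If $A$ or $B$ equals $(M\oplus H)\otimes H^*$, the Proposition of Subsection~\ref{ss:easycase} gives $K_a=K_b=A\oplus B=(M\oplus H)\otimes H^*$, which Step~1 returns. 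Otherwise $m-2<T$ and $n-2<T$ both lie in $\{0,\dots,d-2\}$, and strict monotonicity below $T$ makes the value $A$ (resp. $B$) occur at a unique index in that range, which the brute-force search of Step~1 finds; the boundary $A=M$ (resp. $B=M$) is detected directly.

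Next I would treat $\lambda(H)>0$, whence $\lambda(F)\ge\lambda(H)>0$, so $\lambda(F)\neq 0$, with $F$ irreducible and $\crit(F)$ strongly connected by hypothesis. The monotone sequence $V\otimes F^{\otimes t}$ cannot be eventually constant, as constancy would contradict the unbounded growth ($\sim t\lambda(F)$) of its finite entries, which exist because $V\neq\mathcal{E}$ and $\lambda(F)>0$; hence the full matrices $V\otimes F^{\otimes t}$ are pairwise distinct. I then verify the hypothesis of Theorem~\ref{t:disclog} for $A$ and for $B$ by rerunning the reduction in the proof of Corollary~\ref{c:goodcase} verbatim: postmultiplying a hypothetical violation of condition~1 by $R_{\cycle}$, rescaling by $\lambda^{\otimes k\,l(\cycle)}$, and collapsing to the single CSR term available because $\crit(F)$ is strongly connected yields an equality $V\otimes F^{\otimes t_1}=V\otimes F^{\otimes t_2}$ with $t_1\neq t_2$. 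The sole change from Corollary~\ref{c:goodcase} is that I contradict this equality using the strict monotonicity just established, rather than Lemma~\ref{l:disclog}. Thus Theorem~\ref{t:disclog} applies and Algorithm~\ref{a:disclog}, invoked in Step~2, returns the unique $m-2$ and $n-2$, with $A=M$, $B=M$ handled directly.

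The main obstacle is precisely this last verification: Corollary~\ref{c:goodcase} assumes $V$ has finite entries, whereas here only $V\neq\mathcal{E}$ is available, so its appeal to Lemma~\ref{l:disclog} is not licensed. The special structure $F=I\oplus H\ge I$ is what rescues the argument, since it forces monotonicity of $\{V\otimes F^{\otimes t}\}$ and hence full-matrix distinctness directly from $\lambda(F)>0$. A minor point to dispatch is the degenerate equality $M=V$ (that is, $M\otimes H\oplus H\le M$): I would note that $\lambda(F)>0$ still gives $V<V\otimes F$, and that in the $\lambda(H)\le 0$ regime the order of the checks ``$A=M$'' versus the search over $\{0,\dots,d-2\}$ keeps the recovered exponent consistent with a correct key through the formulas of Step~3.
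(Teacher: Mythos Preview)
Your proposal is correct and follows the same strategy as the paper: both exploit the monotonicity of $\{V\otimes F^{\otimes t}\}$ coming from $F\ge I$ to obtain the stabilise-or-strictly-increase dichotomy, and then invoke Algorithm~\ref{a:disclog}. You are in fact more careful than the paper in explicitly verifying the hypothesis of Theorem~\ref{t:disclog} (rerunning the reduction from Corollary~\ref{c:goodcase} but replacing the appeal to Lemma~\ref{l:disclog} by monotonicity), since Corollary~\ref{c:goodcase} itself requires $V$ to have finite entries whereas here only $V\neq\mathcal{E}$ is assumed.
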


Let us analyse how many operations the algorithm requires.

\begin{itemize}
\item[0.] Computation of $\lambda(H)$ and $V$ requires no more than 
$O(d^3)$ operations.
\item[1.] Checking if $A=(M\oplus H)\otimes H^*$ or $B=(M\oplus H)\otimes H^*$ requires $O(d^3)$ operations. Straightforward checking for powers less than $d-1$ requires $O(d^4)$ operations.
\item[2.] Here we apply Algorithm~\ref{a:disclog}, whose complexity is analysed in Proposition~\ref{p:disclog-comp}. However, see also the discussion in Subsection~\ref{ss:discussion}.
\item[3.] Computation of the key (unless it has been computed at step 1) requires no more than $O(d^3\log\min(m,n))$. This is  done using repeated tropical matrix squaring.  
\end{itemize}


\section{Examples, numerical experiments and discussion}
\subsection{Toy examples}

We first give a couple of toy examples to demonstrate how the attack on the protocol works in the cases $\lambda(H)\leq 0$ and $\lambda(H)>0$.

\begin{example}[$(\lambda(H)\leq 0)$]
\label{eq:h<0}
{\rm 
Let 
$$M =\begin{pmatrix} 8&7 & 2\\
10&3 &6 \\
-10& -1&3\end{pmatrix},\quad H =\begin{pmatrix} 0& -3&-5 \\
-1&-2 &2 \\
1&-3 &-4\end{pmatrix}.$$ 
Bob and Alice pick two random integer numbers $m=5$ and $n=8$ respectively. Alice and Bob compute 
$$
A=B=
\begin{pmatrix} 10& 7&9\\10& 7&9\\4&1 &3\end{pmatrix}=
K_a =K_{b}.
$$
Since $\lambda(H)=0$, we cannot use tropical discrete logarithm method to find $m$ and $n$. However, Eve can check that $A=B=(M\oplus H)\otimes H^*$, hence she concludes that $K_a=K_b=(M\oplus H)\otimes H^*$.}
\end{example}
\begin{example}[$(\lambda(H)>0)$]
{\rm Alice and Bob agree on public matrices 
$$M =
\begin{pmatrix}
-75& -45& -69&60 \\
83&52 &9 &-72 \\
27&92 &92 &-16 \\
87&93 &-3&84
\end{pmatrix},\quad H =
\begin{pmatrix}
1&7 &2 &5 \\
-1&-2 &2 &4 \\
3& 4&2 &2 \\
-5&-10 &10 &0 
\end{pmatrix}.$$ Then they follow the protocol as follows
\begin{itemize}
\item Alice and Bob pick two random integer numbers $m=15$ and $n=16$ respectively.
\item Alice computes $(M,H)^m= (A,H^{\circ m})$ and Bob computes $(M,H)^n=(B,H^{\circ n})$. They exchange the following messages:
$$A =
\begin{pmatrix}
145& 146&148 &144\\
176&177 &179 &175 \\
175&176 &178 &174\\
176&177 &179 &175
\end{pmatrix},\quad  B=\begin{pmatrix}
151&152 &154 &150\\
182&183 &185 &181 \\
181&182 &184 &180\\
182&183 &185 &181
\end{pmatrix}.$$
\item Alice computes $K_{a}=A\oplus B\oplus H^{\circ m}\oplus (B\otimes H^{\circ m})$
and $K_{b}=B\oplus A\oplus H^{\circ n}\oplus (A\otimes H^{\circ n})$. They thus obtain the common secret key:
$$
K_a=K_b=
\begin{pmatrix}
241& 242&244 &240\\
272&273 &275 &271 \\
271&272 &274 &270\\
272&273 &275 &271
\end{pmatrix}.$$
\end{itemize}

\noindent \textbf{Attacking the protocol}\\
Eve as an attacker only knows public matrices $M$ and $H$ and public keys $A$ and $B$. To attack the protocol Eve needs to find $m$ and $n$ and compute $K_{a}$ or $K_{b}$. Using Algorithm 5.3, Eve obtains Alice's private key by the following:
\begin{enumerate}
    \item Eve computes $\lambda(H)=6$ and $$F=I\oplus H= \begin{pmatrix}
1&7 &2 &5 \\
-1&0 &2 &4 \\
3& 4&2 &2 \\
-5&-10 &10 &0 
\end{pmatrix},\quad  
V =
\begin{pmatrix}
55&50 &70 &60 \\
98&99 &97 &97 \\
95& 96& 94&96 \\
92&93 &95 &97 
\end{pmatrix}.$$
\item Since $\lambda(H)>0$, Eve needs to find $m_{a}$ satisfying $A=V\otimes F^{\otimes (m_{a}-2)}.$ For this Eve finds a critical cycle $Z = (1\;2\;4\;3)$ and computes
$$C_Z=R_Z=
    \begin{pmatrix}
    0& 1 &3 &-1 \\
    -5& 0&2 &-2 \\
    -1& -2& 0& -4\\
    1&2 &4 & 0
    \end{pmatrix},\quad  S_Z=\begin{pmatrix}
    -\infty & 1 &-\infty & -\infty\\
    -\infty& -\infty &-\infty & -2  \\
    -3& -\infty & -\infty & -\infty\\
    -\infty& -\infty& 4  & -\infty
    \end{pmatrix}. $$
\item The dimension is $d=4$, hence for  $t=0,\ldots,(4-1)^2=9$, Eve first tries to find $t$ such that $A=V\otimes F^{\otimes t}$. Here we cannot find $t$ satisfying $A=V\otimes F^{\otimes t}$ for these low exponents. 
\item Now Eve uses the CSR method. The length of critical cycle is $l=4$, but it turns out that 
$$C_ZS_Z^kR_Z[F]
= \begin{pmatrix}
    0& 1 &3 &-1 \\
    -1& 0&2 &-2 \\
    -3& -2& 0& -4\\
    1&2 &4 & 0
    \end{pmatrix}\quad 
    \text{for all $k$}.
$$
For $k=0$ Eve finds that $A=V\otimes (C_ZR_Z[F])=\mu +E$ with $\mu=78$. Eve then finds that  $m_{a}=\mu/\lambda(F)+2=\frac{78}{6}+2=15$.
\item Eve computes $K_a=B\otimes (I\otimes H)^{\otimes 15} = \begin{pmatrix}
241& 242&244 &240\\
272&273 &275 &271 \\
271&272 &274 &270\\
272&273 &275 &271
\end{pmatrix}$.
\end{enumerate}
}
\end{example}

\subsection{Numerical experiments}
\label{ss:numexp}
In this section we will describe the numerical experiments which we performed with the tropical discrete logarithm and attack on \cite[Protocol 1]{grigoriev2018tropical}.  

We first discuss how we generated matrix $F$, which gets powered up in the discrete logarithm problem, or matrix $H$ for \cite[Protocol 1]{grigoriev2018tropical}. 
If we generate matrix $F$ by random and all of its entries are real, then it will be irreducible and generically we will have only one critical cycle. 
This case is the same as the one described in Corollary~\ref{c:goodcase}, in which our solution of the tropical discrete logarithm problem and our attack on Protocol~\ref{prot:main} (\cite[Protocol 1]{grigoriev2018tropical})
are guaranteed to work. Therefore, in part of our experiments, we generate matrices $F$ (and $H$) in such a way that the critical graph is guaranteed to have at least three components.

In more detail, we are doing it as follows:
\begin{itemize}
    \item [(a)] We determine two random integer numbers $k_1$ and $k_2$, where $k_1$ is approximately $\frac{1}{3}$ of the dimension of matrix $d$ and $k_2$ is a random integer number between $k_1$ and $d$. Then we generate three random matrices with entries $0$ and $-\infty$. Each matrix has dimension $k_1$, $k_2-k_1$ and $d-k_2$ respectively. The frequency of $0$ entries is approximately $\frac{1}{3}$ and we make sure that each of these matrices contains a cycle and there is $-\infty$ on the diagonal.
    \item [(b)] We compose a $d\times d$ matrix with entries 
    in $\{0,-\infty\}$, which has the three matrices generated above as as its principal submatrices. The rest of entries in this matrix are set to $-\infty$.
    \item [(c)] We substitute all $-\infty$ entries in step (b) with a random negative number in the interval $[-100,-1]$ and add to the whole matrix a nonzero random number $\lambda\in [1,100]$.
    \item [(d)] We apply a diagonal similarity scaling $A\mapsto D^{-1}\otimes A\otimes D$ where $D$ is a diagonal matrix with all diagonal entries equal to $d_i$ for $i\in\{1,\ldots,n\}$ and all off-diagonal entries equal to $-\infty$. We write  $D=\operatorname{diag}(d_1,\ldots,d_n)$ for such matrix, and the max-algebraic inverse of it can be written as $D^{-1}=\operatorname{diag}(-d_1,\ldots, -d_n)$. In our case, the diagonal entries $d_i$ are randomly  selected in the interval $[-100,100]$.
\end{itemize}

The resulting matrix can be then used as matrix $H$ in~\cite[Protocol 1]{grigoriev2018tropical}, however here we also need to make sure that $\lambda(H)>0$, otherwise we are in the very easy case, treated in Subsection~\ref{ss:easycase}. Note that this is guaranteed by taking $\lambda>0$ at step c), as this is the maximum cycle mean of the matrix generated following (a), (b), (c) and (d). 

For the tropical discrete logarithm problem as well as for the protocol, we run similar experiments using the following parameters:
\begin{itemize}
    \item Dimension $d$ is in the interval $[6,500]$;
    \item The entries of matrix $M$ are random integer numbers in the interval $[-100,100]$; 
    \item Exponents $m,n$ used by Alice and Bob, and the secret key $t$ in the tropical discrete logarithm are random integer numbers in the interval $[(d-1)^2+1,d^2]$.
\end{itemize}
We coded all our attacks in MATLAB and performed experiments using MATLAB R2019/b, also using supercomputer Bluebear system (University of Birmingham) for dimensions between 400 and 500. 
We run 100 experiments for each dimension $d$:
\begin{enumerate}
    \item[1.] We solved the tropical discrete logarithm by Algorithm~\ref{a:disclog} where we skipped step (1): straightforward ``catching''  powers up to $(d-1)^2$. In this experiment we found $100\%$ success rate.
    \item[2.] We attacked \cite[Protocol 1]{grigoriev2018tropical} using Algorithm~\ref{a:attack}. In this experiment we also found $100\%$ success rate.
\end{enumerate}
For the dimensions up to $100$, the average computation times are given on Figure~\ref{fig:comptimes}. We distinguish between the cases where $H$ is randomly generated and where $F=I\oplus H$ is guaranteed to have three critical components. However, the average time that it takes is similar (being slightly less for the case of special matrices), and it does not exceed 6 seconds for dimensions up to $100$ in both cases.
\begin{figure}
    \centering
    \includegraphics[scale=0.5]{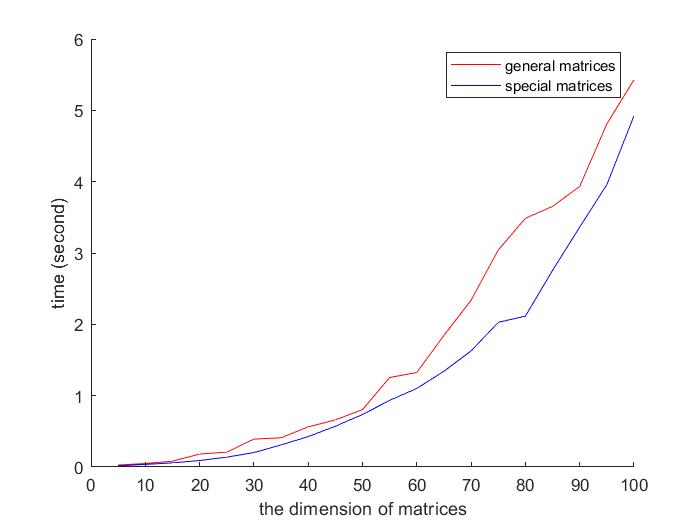} 
    \caption{Time required by Algorithm~\ref{a:attack} in the case where $H$ is randomly generated (``general matrices'') and in the case where $\crit(F)$ is guaranteed to have at least three critical components and $\lambda(F)>0$ (``special matrices)''}
    \label{fig:comptimes}
\end{figure}

\subsection{Discussion}
\label{ss:discussion}

To our knowledge, the first attack on~\cite[Protocol 1]{grigoriev2018tropical} was suggested by Rudy and Monico~\cite{RM-2005}. The attack is based on the property that the sequence $(A^{(k)})_{k\geq 1}$, where $A^{(k)}$ is defined by $(M,H)^k=(A^{(k)},H^{\circ k})$, is nondecreasing (if viewed in max-plus algebra). This allows Eve to apply a binary search to find the secret keys of Alice and Bob. This attack is guaranteed to work and can be efficiently implemented~\cite{RM-2005}. However, its worst-case computational complexity is $O(K^2)$, where $K$ is an upper bound on the logarithm of the secret keys of Alice and Bob. 

Isaac and Kahrobaei~\cite{IK-2011} take a different approach. They find the secret keys of Alice and Bob based on the assumption that the sequence $(A^{(k)})_{k\geq 1}$ is ultimately periodic, which means that $A_{ij}^{(k+p)}=\xi+A_{ij}^{(k)}$ for all $k\geq D$, all indices $i$ and $j$ , some real $\xi$ and some $D,\,p\geq 1$. Not being dependent on the magnitude of the secret keys of Alice and Bob, this attack is more efficient in practice. In view of Proposition~\ref{p:MHpowers} and~\eqref{e:MHpower-regroup} of the present paper, the ultimate periodicity assumption holds when $H$ is irreducible (i.e., when graph associated with $H$ is strongly connected). However, it generally fails when Alice and Bob choose $H$ to be reducible, in which case the attack of~\cite{IK-2011} would not apply. Another implicit assumption for the efficiency of this attack is that the defect (i.e., the periodicity transient) $D$ is rather small. But the magnitude of $D$ can be arbitrarily large for a sequence of tropical matrix powers, which can pose a problem when Alice and Bob are allowed choose $M$ and $H$ (even if with real entries only) to force a large $D$. 

Clearly, our attack on Protocol~\ref{prot:main}, which is based on the ultimate periodicity of the critical columns of $\left((I\oplus H)^{\otimes k}\right)_{k\geq 1}$ (and, therefore, the same columns of $A^{(k)}$) can be applied in the general case. As in~\cite{IK-2011} and unlike the attack of~\cite{RM-2005}, the computation of secret keys of Alice and Bob in our attack does not depend on the magnitude of these secret keys. Furthermore, our attack is directly based on the solution of the tropical discrete logarithm problem in the case $\lambda(H)>0$ and is reduced to the optimal paths problem (i.e., computation of the Kleene star) for $\lambda(H)\leq 0$. 

Although the statement of the tropical discrete logarithm problem is quite obvious, the authors are unaware of works in tropical algebra literature, where this problem was posed and solved. As for Algorithm~\ref{a:disclog}, which we are suggesting, there is clearly some room for improvement. Firstly, the most inefficient part of this algorithm is the straightforward ``catching powers'' up to $(d-1)^2$ in part 1., and here we see the potential in using the ideas of \cite{RM-2005} and~\cite{IK-2011}. In the case of $F=I\oplus H$, where the tropical matrix powers are nondecreasing, we can use the binary search as in~\cite{RM-2005}. Note that this decreases the complexity of Step 1 to $O((m+d)d^2(\log d)^2)$ (similar to the estimate of~\cite{RM-2005} but with $(\log d)^2$ instead of $K^2$). This holds for the application of Algorithm~\ref{a:disclog} to our attack. Secondly, as noticed in Remark~\ref{r:lighterversion}, we could try to check $A_{\cdot i}-(V\otimes C_{\cycle}S_{\cycle}^{t(\rem l(\cycle))}[F])_{\cdot i}=\mu+ E_{\cdot i}$ just for one $i\in\cycle$.  Thirdly, our theoretical claims can guarantee that Algorithm~\ref{a:disclog} works in some special cases, which includes the generic case encountered when $F$ has randomly chosen real entries. However, we have not found any counterexample to Algorithm~\ref{a:disclog} in the case where the critical graph has several components, indicating that its performance and hence the performance of Algorithm~\ref{a:attack} could be guaranteed in a more general case. Such counterexamples, as well as more refined and more efficient versions of Algorithm~\ref{a:disclog} and Algorithm~\ref{a:attack}, guaranteed in more general cases, will be sought in the future.

\section*{Acknowledgement}
We would like to thank the anonymous referee of our paper for their careful reading, useful comments and appreciation of our work.

\appendix
\section{Associativity of the semidirect product}

In this appendix we show that the semidirect product, which we are considering, is indeed associative. We need to prove:
\begin{equation*}
[(M,G)\cdot(A,H)]\cdot (B,J)=(M,G)\cdot [(A,H)\cdot (B,J)]
\end{equation*}
Indeed, on the left-hand side we have:
\begin{align*}
    [(M,G)\cdot (A,H)]\cdot (B,J)&= (M\circ H\oplus A,\ G\circ H)\cdot (B,J)\\
                &= ((M\oplus H\oplus (M\otimes H)\oplus A)\circ J\oplus B,\ G\circ H\circ J)\\
                &= (M\circ J\oplus H\circ J\oplus (M\otimes H)\circ J\oplus A\circ J\oplus B, G\circ H\circ J)\\
                &= (M\oplus J \oplus (M\otimes J)\oplus H\oplus J\oplus (H\otimes J) \oplus 
                (M\otimes H)\oplus J\\
                &~~\oplus (M\otimes H\otimes J) \oplus A \oplus J \oplus (A\otimes J) \oplus B, G\circ H\circ J)\\
                &=(M\oplus H\oplus J\oplus A\oplus B\oplus (M\otimes J)\oplus (M\otimes H)\oplus (H\otimes J) \oplus (A\otimes J)\\
                &\oplus (M\otimes H \otimes J),\ G\circ H\circ J)
\end{align*}
On the right-hand side:
\begin{align*}
    (M,G)\cdot [(A,H)\cdot (B,J)]&= (M,G)\cdot (A\circ J\oplus B, H\circ J)\\
    &= (M,G)\cdot (A\oplus J\oplus A\otimes J\oplus B, H\oplus J\oplus H\otimes J)\\
    &=(M\circ(H\oplus J\oplus H\otimes J)\oplus A\oplus J\oplus A\otimes J\oplus B, G\circ H\circ J)\\
    &=(M\circ H\oplus M\circ J\oplus M\circ (H\otimes J)\oplus A\oplus J\oplus A\otimes J \oplus B, G\circ H\circ J)\\
    &= (M\oplus H\oplus (M\otimes H)\oplus M\oplus J\oplus (M\otimes J)\oplus M\oplus (H\otimes J)\oplus\\ 
    &(M\otimes H\otimes J)\oplus 
    A\oplus J\oplus (A\otimes J) \oplus B, G\circ H\circ J)\\
    &=(M\oplus H\oplus J\oplus A\oplus B\oplus (M\otimes J)\oplus (M\otimes H)\oplus (H\otimes J)\\ 
    &\oplus (A\otimes J)\oplus (M\otimes H \otimes J) ,G\circ H\circ J),
\end{align*}
which is identical with what we obtained for the left-hand side.

\section{CSR proofs}


\subsection{Proofs of Proposition~\ref{p:CSR} and Proposition~\ref{p:cyclicity}}

Here we deduce these propositions from results of Merlet et al.~\cite{MNS}. To do this, we need to introduce other versions of CSR decomposition and expansion, which appeared in that work. First of all, we can define the ``big'' CSR terms by considering the whole critical graph instead of individual components. For this, let $\sigma$ be the l.c.m. of all $\sigma_1,\ldots,\sigma_l$ and define $U=((\lambda^-\otimes F)^{\otimes \sigma})^+$. Then let matrices $C,$ $R$ and $S$ be defined by
\begin{equation*}
\begin{split}
C_{ij}&=
\begin{cases}
U_{ij} &\text{if $j$ is in $\crit(F)$}\\
\0 &\text{otherwise,}
\end{cases}\quad
R_{ij}=
\begin{cases}
U_{ij} &\text{if $i$ is in $\crit(F)$}\\
\0 &\text{otherwise,}
\end{cases}\\
S_{ij}&=
\begin{cases}
\lambda^- \otimes F_{ij} &\text{if $(i,j)\in \crit(F)$}\\
\0 &\text{otherwise.}
\end{cases}
\end{split}
\end{equation*}

We will denote $CS^tR[F]=C\otimes S^{\otimes t}\otimes R$.
By Wielandt's bound (15) in \cite[Theorem 4.1]{MNS}, we have
\begin{equation}
\label{e:weakCSR}    
F^{\otimes t}=\lambda^{\otimes t}\otimes CS^tR[F]\oplus (B[F])^{\otimes t}=  
\lambda^{\otimes t}\otimes CS^{t(\rem\sigma)}R[F]\oplus (B[F])^{\otimes t},\quad t\geq (d-1)^2+1    
\end{equation}

Let us now discuss how the CSR term appearing in~\eqref{e:weakCSR} can be decomposed into smaller CSR terms. For this, assume some numbering of the critical components and 
for $\mu\colon 1\leq \mu\leq l-1$, define matrix $F_{\mu+1}$ by
\begin{equation*}
(F_{\mu+1})_{ij}=
\begin{cases}
-\infty, & \text{if $i\in\crit_{\mu}$ or $j\in\crit_{\mu}$},\\
(F_{\mu})_{ij}, &\text{otherwise},
\end{cases}
\end{equation*}
with $F_1=F$. Observe that $\lambda(F_{\mu})=\lambda$ for any such $\mu$, and that the critical graph of $F_{\mu}$ consists of components $\crit_{\mu},\ldots,\crit_l$. Denote $U'_{\mu}=((\lambda^-\otimes F_{\mu})^{\otimes\sigma_\mu})^+$. Then, let matrices $C'_{\mu},$ $R'_{\mu}$ and $S'_{\mu}$
for $\mu=1,\ldots,l$ be defined by:
\begin{equation*}
\begin{split}
(C'_{\mu})_{ij}&=
\begin{cases}
(U'_{\mu})_{ij} &\text{if $j$ is in $\crit_\nu$}\\
\0 &\text{otherwise,}
\end{cases}\quad
(R'_{\mu})_{ij}=
\begin{cases}
(U'_{\mu})_{ij} &\text{if $i$ is in $\crit_\nu$}\\
\0 &\text{otherwise,}
\end{cases}\\
(S'_{\mu})_{ij}&=(S_{\mu})_{ij}=
\begin{cases}
\lambda^- \otimes (F_{\mu})_{ij} &\text{if $(i,j)\in \crit_\nu$}\\
\0 &\text{otherwise.}
\end{cases}
\end{split}
\end{equation*}
Let us also compare $C'_{\mu}$, $S'_{\mu}$ and $R'_{\mu}$ with the matrices introduced in~\eqref{csrdef}. Notice that $S'_{\mu}=S_{\mu},$ for all $\mu$ and also 
$C'_1=C_1$ and $R'_1=R_1$,
but 
in general only $C'_{\mu}\leq C_{\mu}$ and $R'_{\mu}\leq R_{\mu}$. We further 
denote $C'_{\mu}S_{\mu}^{t(\rem\sigma_{\nu})}R'_{\mu}[F]=
C'_{\mu}\otimes S_{\mu}^{t(\rem\sigma_{\mu})}\otimes R'_{\mu}$, similarly to the CSR notation before. According to~\cite[Corollary 4.3]{MNS}, the following decomposition holds:
\begin{equation}
\label{e:CSRdecomp1}
CS^{t(\rem\sigma)}R[F]= \bigoplus_{\mu=1}^l C'_{\mu}S_{\mu}^{t(\rem\sigma_{\mu})}R'_{\mu}[F],\qquad\forall t. \end{equation}

Combining~\eqref{e:weakCSR} and~\eqref{e:CSRdecomp1}, we obtain
\begin{equation}
\label{e:CSRcomb1}
F^{\otimes t}=
\lambda^{\otimes t}\otimes\bigoplus_{\mu=1}^l C'_{\mu}S_{\mu}^{t(\rem\sigma_{\mu})}R'_{\mu}[F]\oplus (B[F])^{\otimes t},\quad \forall t\geq (d-1)^2+1.
\end{equation}
Observing that $C'_1S_1^{t(\rem\sigma_1)}R'_1[F]=C_1S_1^{t\rem\sigma_1}R_1[F]$ we can also write:
\begin{equation}
\label{e:CSRcomb15}
F^{\otimes t}= \lambda^{\otimes t}\otimes C_1S_1^{t(\rem\sigma_1)}R_1[F]\oplus
\lambda^{\otimes t}\otimes \bigoplus_{\mu=2}^l C'_{\mu}S_{\mu}^{t(\rem\sigma_{\mu})}R'_{\mu}[F]\oplus (B[F])^{\otimes t},\quad \forall t\geq (d-1)^2+1.
\end{equation}

But by a similar combination of~\cite{MNS} Theorem 4.1 and Corollary 4.3, we also have:
\begin{equation}
\label{e:CSRcomb2}
    (B_1[F])^{\otimes t}= \lambda^{\otimes t}\otimes \bigoplus_{\mu=2}^l C'_{\mu}S_{\mu}^{t(\rem\sigma_{\mu})}R'_{\mu}[F]\oplus (B[F])^{\otimes t},\quad \forall t\geq (d-1)^2+1,
\end{equation}
where $B_1[F]$ is defined as in~\eqref{e:bdef} with $\nu=1$.
Substituting~\eqref{e:CSRcomb2} into~\eqref{e:CSRcomb15} we obtain 
\begin{equation*}
 F^{\otimes t}= \lambda^{\otimes t}\otimes C_1S_1^{t(\rem\sigma_1)}R_1[F]\oplus (B_1[F])^{\otimes t},    
\end{equation*}
which is the same as~\eqref{e:CSR} for $\nu=1$, thus establishing Proposition~\ref{p:CSR}. 

To explain~\eqref{e:CSRdec} (Proposition~\ref{p:cyclicity}), observe that~\eqref{e:CSRcomb1} holds for any numbering of critical components. In other words, for any numbering of critical components we get the corresponding CSR decomposition of the form~\eqref{e:CSRcomb1}. Depending on which of these components is the first one, the first term in~\eqref{e:CSRcomb1} can be equal to any of the terms $C_{\nu}S_{\nu}^{t(\rem\sigma_{\nu})}R_{\nu}[F]$, while any other term in~\eqref{e:CSRcomb1} is less than or equal to one of these $C_{\nu}S_{\nu}^{t(\rem\sigma_{\nu})}R_{\nu}[F]$. This implies that taking the tropical sum of all CSR decompositions~\eqref{e:CSRcomb1} written for all possible numberings of the critical components we obtain~\eqref{e:CSRdec}:
\begin{equation}
\label{e:CSRdec1}
F^{\otimes t}=
\lambda^{\otimes t}\otimes\bigoplus_{\nu=1}^l C_{\nu}S_{\nu}^{t(\rem\sigma_{\nu})}R_{\nu}[F]\oplus (B[F])^{\otimes t},\quad \forall t\geq (d-1)^2+1,
\end{equation}

For the irreducible matrices, the existence of $T(F)$ such that 
\begin{equation}
\label{e:CSRult}
F^{\otimes t}=\lambda^{\otimes t}\otimes CS^{t(\rem\sigma)}R[F],\qquad \forall t\geq T(F)    
\end{equation}
follows from~\cite[Theorem 5.6]{SerSch}, and a number of upper bounds on $T(F)$ have been established in~\cite{MNS}. Recall also that 
\begin{equation}
 \label{e:CSRineqs1}
 CS^tR[F]=\bigoplus_{\mu=1}^l C'_{\mu}S_{\mu}^{t(\rem\sigma_{\mu})}R'_{\mu}[F]\leq \bigoplus_{\nu=1}^l C_{\nu}S_{\nu}^{t(\rem\sigma_{\nu})}R_{\nu}[F],\quad\forall t.  \end{equation}

It follows from~\eqref{e:CSRdec1} and \eqref{e:CSRult} that
\begin{equation}
\label{e:CSRineqs2}
    \bigoplus_{\nu=1}^l C_{\nu}S_{\nu}^{t(\rem\sigma_{\nu})}R_{\nu}[F]\leq CS^{t(\rem\sigma)}R[F],\quad \forall t\geq\max(T(F),\, (d-1)^2+1).
\end{equation}
Combining~\eqref{e:CSRineqs1},~\eqref{e:CSRineqs2} and the periodicity of CSR terms, we can replace inequalities in~\eqref{e:CSRineqs1} and~\eqref{e:CSRineqs2} with equalities, and we can write~\eqref{e:CSRult} as
$$
F^{\otimes t}=
\lambda^{\otimes t}\otimes\bigoplus_{\nu=1}^l C_{\nu}S_{\nu}^{t(\rem\sigma_{\nu})}R_{\nu}[F],
$$
establishing~\eqref{e:CSRirred} and completing the proof of Proposition~\ref{p:cyclicity}.

\subsection {Proof of Proposition~\ref{p:cycleCSR}}

The proof given below is a 
simplified version of the proof of~\cite[Theorem 6.1]{MNS}

In the beginning of this proof let us introduce some extra notation, following~\cite{MNS}. For walk $W$ denote by $p(W)$ its weight, and for a set of walks $\mathcal{W}$, denote by $p(\mathcal{W})$ the maximal weight of a walk in $\mathcal{W}$. Below we are going to use the following sets of walks:
\begin{itemize}
    \item $\walks{i}{j}:$ set of walks connecting node $i$ to node $j$;
    \item $\walkslen{i}{j}{t}:$ set of walks connecting node $i$ to node $j$ and having length $t$; 
    \item $\walkslen{i}{j}{t,l}:$ set of walks connecting node $i$ to node $j$ and having length $t(\rem l)$;
    \item $\walkslennode{i}{j}{t,l}{\subcrit}$: set of walks connecting node $i$ to node $j$, going through a node in subgraph $\subcrit$ and having length $t(\rem l)$.
\end{itemize}

In particular, we have the following optimal walk interpretation of the entries of a matrix power and a metric matrix~\cite{butkovic}:
\begin{equation}
(A^{\otimes t})_{ij}= p(\walkslen{i}{j}{t}),
\qquad 
(A^+)_{ij}=p(\walks{i}{j}).    
\end{equation}

The proof given below is a simplified version of the 
proof of \cite[Theorem 6.1]{MNS}. For the sake of this proof we assume without loss of generality that the critical graph is strongly connected, i.e., it consists of one component, and let $C,$ $S$ and $R$ be defined from it.
We will show that for arbitrary $i$ and $j$
\begin{equation}
\label{e:goal}
(C\otimes S^{\otimes t}\otimes R)_{ij}=(C_{\cycle}\otimes  S_{\cycle}^{\otimes t}\otimes R_{\cycle})_{ij}=
p(\walkslennode{i}{j}{t,l(\cycle)}{\cycle}).
\end{equation}

We first show
\begin{equation}
\label{e:goal1}
(C_{\cycle}\otimes  S_{\cycle}^{\otimes t}\otimes R_{\cycle})_{ij}
\leq p(\walkslennode{i}{j}{t,l(\cycle)}{\cycle}), \quad
 (C\otimes S^{\otimes t}\otimes R)_{ij}\leq  p(\walkslennode{i}{j}{t,l(\cycle)}{\cycle}).  
\end{equation}
For the first inequality, we 
have $(C_{\cycle}\otimes S_{\cycle}^{\otimes t}\otimes R_{\cycle})_{ij}=
(C_{\cycle})_{is_1}\otimes (S_{\cycle}^{\otimes t})_{s_1s_2}\otimes (R_{\cycle})_{s_2j}$, for some $s_1,s_2\in\cycle$, which means that in terms of walks,
there is a walk $V$ such that $p(V)=(C_{\cycle}\otimes S_{\cycle}^{\otimes t}\otimes R_{\cycle})_{ij}$ and decomposed as $V=V_1V_2V_3$, where 
$V_1\in\walkslen{i}{s_1}{0,l(\cycle)}$, $V_2\in\walkslen{s_1}{s_2}{t}$ and $V_3\in\walkslen{s_2}{j}{0,l(\cycle)}$. It is then obvious that 
$V\in \walkslennode{i}{j}{t,l(\cycle)}{\cycle}$, and the first inequality of~\eqref{e:goal1} follows.

As for the second inequality, $(C\otimes S^{\otimes t}\otimes R)_{ij}$ is the weight of a walk $W$ that can be decomposed as 
$W=W_1W_2W_3$, where 
$W_1\in\walkslen{i}{k_1}{0,\sigma}$, $W_2\in\walkslen{k_1}{k_2}{t}$ and $W_3\in\walkslen{k_2}{j}{0,\sigma}$ and $k_1,k_2\in\crit_1$.
We now introduce a walk 
$W_4$ connecting $k_1$ to a node $k_3\in\cycle$ on $\crit(F)$ and a walk $W_5$ going back to $k_1$ on $\crit(F)$. The composition $W_4W_5$ forms a closed walk on $\crit(F)$, and its length is a multiple of $\sigma$. In $k_3$, we insert a closed walk $W_6$ of a big enough length, whose all arcs belong to $\crit(F)$ and whose length is such that the sum of lengths of $W_1,$ $W_3$, $W_4$, $W_5$ and $W_6$ is a multiple of $l(\cycle)$. Then for the walk $\Tilde{W}=W_1W_4W_6W_5W_2W_3,$ we have $\Tilde{W}\in\walkslennode{i}{j}{t,l(\cycle)}{\cycle}$. 
We thus have $p(W)=p(\Tilde{W})\leq p(\walkslennode{i}{j}{t,l(\cycle)}{\cycle}),$ hence the second 
inequality of~\eqref{e:goal1}.

We now prove:
\begin{equation}
\label{e:goal2}
(C_{\cycle}\otimes  S_{\cycle}^{\otimes t}\otimes R_{\cycle})_{ij}
\geq p(\walkslennode{i}{j}{t,l(\cycle)}{\cycle}), \quad
 (C\otimes S^{\otimes t}\otimes R)_{ij}\geq  p(\walkslennode{i}{j}{t,l(\cycle)}{\cycle}).  
\end{equation}

For this, consider a walk $W$ such that $p(W)=p(\walkslennode{i}{j}{t,l(\cycle)}{\cycle})$. Then we
decompose it as $W=V_1V_2$, where $V_1$ connects $i$
to a node $k\in\cycle\subseteq \crit(F)$, and $V_2$ connects 
$k$ to $j$. At node $k$ we insert $m\cycle$: a number of copies of $\cycle$ such that $m l(\cycle)\geq t+l(\cycle)$. We then find $V_3,$  $W_2$ and $V_4$ such that $m\cycle=V_3W_2V_4,$  $W_2$ has length $t$ and both 
$l(V_1)+l(V_3)$ and $l(V_4)+l(V_2)$ are multiples of $l(\cycle)$.
Since $\Tilde{W}=V_1V_3W_2V_4V_2\in \walkslennode{i}{j}{t,l(\cycle)}{\cycle}$ and $m$ is big enough,
such walks $V_3,$ $W_2$ and $V_4$ can be found. Denoting by $k_1$ the end of walk $V_3$ and by $k_2$ the beginning of walk $V_4$, we see that 
\begin{equation*}
\begin{split}
& p(V_1)+p(V_3)\leq (C_{\cycle})_{ik_1},\quad  
p(V_1)+p(V_3)\leq C_{ik_1},\quad 
p(W_2)\leq (S_{\cycle}^{\otimes t})_{k_1k_2},\quad 
p(W_2)\leq S^{\otimes t}_{k_1k_2},\\
& p(V_4)+p(V_2)\leq (R_{\cycle})_{k_2j},\quad  
p(V_4)+p(V_2)\leq R_{k_2j},
\end{split}
\end{equation*}
and this implies both inequalities of~\eqref{e:goal2}.


\begin{thebibliography}{10}

\bibitem{baccelli1992synchronization}
Fran{\c{c}}ois Baccelli, Guy Cohen, Geert~Jan Olsder, and Jean~Pierre Quadrat.
\newblock {\em Synchronization and linearity: an algebra for discrete event
  systems}.
\newblock John Wiley \& Sons Ltd, 1992.

\bibitem{DigraphCond}
Yves Balcer and A.F. Veinott.
\newblock Computing a graph's period quadratically by node condensation.
\newblock {\em Discrete Mathematics}, 4:295--303, 1973.

\bibitem{butkovic-combinatorics}
Peter Butkovi{\v{c}}.
\newblock Max algebra: the linear algebra of combinatorics?
\newblock {\em Linear Algebra and its Applications}, 367:313--335, 2003.

\bibitem{butkovic}
Peter Butkovi{\v{c}}.
\newblock {\em Max-linear systems: theory and algorithms}.
\newblock Springer Science \& Business Media, 2010.

\bibitem{TwoCores}
Peter Butkovi{\v{c}}, Hans Schneider, Serge{\u{\i}} Sergeev, and Bit-Shun Tam.
\newblock Two cores of a nonnegative matrix.
\newblock {\em Linear Algebra and its Applications}, 439:1929--1954, 2013.

\bibitem{Coc-98}
Jean Cochet-Terrasson, Guy Cohen, St{\'e}phane Gaubert, Michael~M. Gettrick,
  and Jean-Pierre Quadrat.
\newblock Numerical computation of spectral elements in max-plus algebra.
\newblock In {\em Proceedings of the IFAC conference on systems structure and
  control}, pages 699--706, IRCT, Nantes, France, 1998.

\bibitem{CDQV-85}
Guy Cohen, Didier Dubois, Jean-Pierre Quadrat, and Michel Viot.
\newblock A linear system theoretic view of discrete event processes and its
  use for performance evaluation in manufacturing.
\newblock {\em IEEE Transactions on Automatic Control}, AC--30:210--220, 1985.

\bibitem{CDQV-83}
Guy Cohen, Didier Dubois, Jean-Pierre Quadrat, and Michel Viot.
\newblock Analyse du comportement p{\'{e}}riodique de syst{\`{e}}mes de
  production par la th{\'{e}}orie des dio{\"{\i}}des.
\newblock Technical report, INRIA, F{\'e}vrier 1983.
\newblock Rapport de Recherche no. 191.

\bibitem{grigoriev2014tropical}
Dima Grigoriev and Vladimir Shpilrain.
\newblock Tropical cryptography.
\newblock {\em Communications in Algebra}, 42(6):2624--2632, 2014.

\bibitem{grigoriev2018tropical}
Dima Grigoriev and Vladimir Shpilrain.
\newblock Tropical cryptography {II}. extensions by homomorphisms.
\newblock {\em Communications in Algebra}, 47:4224--4229, 2018.

\bibitem{heidergott2014max}
Bernd Heidergott, Geert~Jan Olsder, and Jacob Van~der Woude.
\newblock {\em Max Plus at Work}.
\newblock Princeton University Press, 2006.

\bibitem{IK-2011}
Steve Isaac and Delaram Kahrobaei.
\newblock A closer look at the tropical cryptography.
\newblock {\em International Journal of Computer Mathematics: Computer Systems Theory} 6(2):137--142, 2021.

\bibitem{kotov2015analysis}
Matvei Kotov and Alexander Ushakov.
\newblock Analysis of a key exchange protocol based on tropical matrix algebra.
\newblock {\em Journal of Mathematical Cryptology}, 12(3):137--141, 2018.

\bibitem{MNSS}
Glenn Merlet, Thomas Nowak, Hans Schneider, and Serge{\u{\i}} Sergeev.
\newblock Generalizations of bounds on the index of convergence to weighted
  digraphs.
\newblock {\em Discrete Applied Mathematics}, 178:121--134, 2014.

\bibitem{MNS}
Glenn Merlet, Thomas Nowak, and Serge{\u{\i}} Sergeev.
\newblock Weak {CSR} expansions and transience bounds in max-plus algebra.
\newblock {\em Linear Algebra and its Applications}, 461:163--199, 2014.

\bibitem{muanalifahmodifying}
Any Muanalifah and Serge{\u{\i}} Sergeev.
\newblock Modifying the tropical version of stickel’s key exchange protocol.
\newblock {\em Applications of Mathematics}, 65(6):727--753, 2020.

\bibitem{Nacht}
Karl Nachtigall.
\newblock Powers of matrices over an extremal algebra with applications to
  periodic graphs.
\newblock {\em Mathematical Methods of Operations Research}, 46:87--102, 1997.

\bibitem{ORE-99}
Geert-Jan Olsder, Kees Roos, and R.J. van Egmond.
\newblock {An efficient algorithm for critical circuits and finite eigenvectors
  in the max-plus algebra}.
\newblock {\em Linear Algebra and its Applications}, 295(1):231--240, 1999.

\bibitem{RM-2005}
Dylan Rudy and Chris Monico.
\newblock Remarks on a tropical key exchange system.
\newblock {\em  Journal of Mathematical Cryptology} 15(1): 280-283, 2020.

\bibitem{SerSch}
Serge{\u{\i}} Sergeev and Hans Schneider.
\newblock {CSR} expansions of matrix powers in max algebra.
\newblock {\em Transactions of the American Mathematical Society},
  364(11):5969--5994, 2012.

\bibitem{vorobyev1}
N.N. Vorobyev.
\newblock Extremal algebra of positive matrices.
\newblock {\em Elektronische Informationsverarbeitung und Kybernetik},
  3:39--71, 1967.

\bibitem{vorobyev2}
N.N. Vorobyev.
\newblock Extremal algebra of non-negative matrices.
\newblock {\em Elektronische Informationsverarbeitung und Kybernetik},
  6:303--311, 1970.

\end{thebibliography}
\end{document}